\newtheorem{lemma}{Lemma}
\numberwithin{lemma}{section}
\newtheorem{theorem}[lemma]{Theorem}
\newtheorem*{theorem*}{Theorem}
\newtheorem*{corollary*}{Corollary}
\newtheorem{proposition}[lemma]{Proposition}
\newtheorem{corollary}[lemma]{Corollary}
\theoremstyle{definition}
\newtheorem{definition}[lemma]{Definition}
\newtheorem{note}[lemma]{Note}
\newtheorem{example}[lemma]{Example}
\newtheorem{remark}[lemma]{Remark}
\newtheorem*{remark*}{Remark}
\newcommand{\MC}{\mathrm{MC}}
\newcommand{\MH}{\mathrm{MH}}
\newcommand{\N}{\mathbb{N}}
\newcommand{\Q}{\mathbb{Q}}
\newcommand{\R}{\mathbb{R}}
\newcommand{\Z}{\mathbb{Z}}
\newcommand{\bfx}{{\underline{x}}}
\newcommand{\bfy}{{\underline{y}}}
\newcommand{\bfi}{\underline{i}}
\DeclareMathOperator{\Hom}{Hom}
\newcommand{\cutprod}{\cdot}
\newcommand{\hahn}{\Q(\!(q^\R)\!)}
\DeclareMathOperator{\rank}{rank}
\newcommand{\adj}{\mathrm{Adj}}
\newcommand{\V}{\mathcal{V}}
\newcommand{\A}{\mathcal{A}}
\newcommand{\Ch}{\mathrm{Ch}}
\newcommand{\Ab}{\mathrm{Ab}}
\newcommand{\two}{\mathbf{2}}
\newcommand{\set}{\mathrm{Set}}
\newcommand{\true}{\mathtt{t}}
\newcommand{\false}{\mathtt{f}}
\newcommand{\AW}{\mathrm{AW}}
\newcommand{\uu}{a}
\newcommand{\vv}{b}
\title{Magnitude cohomology}
\author{Richard Hepworth}
\address{Institute of Mathematics\\
University of Aberdeen\\
Aberdeen AB24 3UE\\
United Kingdom}
\email{r.hepworth@abdn.ac.uk}
\subjclass[2010]{55N35 (primary), 18F99, 18D20, 51F99 (secondary)}
\keywords{Magnitude, categorification, enriched categories, metric spaces}
\begin{document}

\begin{abstract}
Magnitude homology was introduced by Hepworth and Willerton in the case of
graphs, and was later extended 
by Leinster and Shulman to metric spaces
and enriched categories.
Here we introduce the dual theory, magnitude cohomology,
which we equip with the structure of an associative unital graded ring.
Our first main result is a `recovery theorem'
showing that the magnitude cohomology ring of a finite metric space
completely determines the space itself.
The magnitude cohomology ring is non-commutative in general, for example
when applied to finite metric spaces, but in some settings it is
commutative, for example when applied to ordinary categories.
Our second main result explains this situation by proving that the magnitude
cohomology ring of an enriched category is graded-commutative
whenever the enriching category is cartesian.
We end the paper by giving
complete computations of magnitude cohomology rings for several
large classes of graphs.
\end{abstract}

\maketitle

\section{Introduction}

\subsection{Overview}
In this paper we introduce and investigate \emph{magnitude cohomology}
of generalised metric spaces and enriched categories.
Our theory is dual, in the same sense as singular homology and cohomology, 
to the theory of \emph{magnitude homology} introduced by
Hepworth and Willerton~\cite{HepworthWillerton}
and later extended by Leinster and Shulman~\cite{LeinsterShulman}.
As in the singular case, we find that the introduction of 
cohomology adds strength and structure to the whole theory.
We show that the magnitude cohomology groups
form a unital associative graded ring, 
which is noncommutative in many cases.
We prove a \emph{recovery theorem}
which shows that the magnitude cohomology of a large class of metric
spaces (including finite metric spaces and arbitrary directed graphs)
completely determines the metric space in question.
Moving to the context of enriched categories,
we prove that the magnitude cohomology ring of an enriched category
is graded-commutative so long as the enriching category is cartesian.
Finally, specialising to undirected graphs,
we give complete computations of the magnitude cohomology ring
for diagonal graphs and for odd cyclic graphs, and we establish a
connection between magnitude cohomology and the quiver algebra of a graph.

\subsection{Background and motivation}
Leinster's theory of \emph{magnitude}
is a mechanism that associates numerical invariants
to mathematical objects of various kinds.
In greatest generality it is an invariant of enriched categories.
The power of this theory is that different choices of enriching category
take us into different regions of mathematics, with
notions of magnitude for posets, categories,
graphs, metric spaces and more,
and that the resulting invariants are meaningful and interesting
in many of these settings.
In the case of finite posets the magnitude
is precisely the Euler characteristic of the order complex.
For finite categories the magnitude encompasses the Euler characteristic
of the classifying space and the cardinality of groupoids, 
but is defined more generally.
In the case of graphs the magnitude is a formal power series
with many attractive properties, such as 
product and inclusion-exclusion formulas,
and invariance under certain Whitney twists.
The magnitude of finite metric spaces is a cardinality-like invariant,
described as the `effective number of points', that
first arose as a measurement of biological 
diversity~\cite{SolowPolasky}.
Perhaps more importantly, the magnitude of finite metric spaces 
can be extended to compact metric spaces.
In this setting the magnitude 
is known to encode geometric information,
such as the volume and perimeter of domains in odd-dimensional 
Euclidean space~\cite{GimperleinGoffeng},
although it remains rather mysterious and difficult to compute.
We refer the reader to~\cite{LeinsterEuler} for 
magnitude of ordinary categories and posets,
\cite{LeinsterMagnitudeMetric} for enriched categories and metric spaces,
and~\cite{LeinsterGraph} for graphs.

\emph{Magnitude homology} 
was first introduced by the author and 
Willerton~\cite{HepworthWillerton} in the setting of graphs, 
and it categorifies the magnitude in exactly
the same sense that Khovanov homology categorifies the Jones polynomial.
Leinster and Shulman later extended magnitude
homology to categories enriched in a 
semicartesian category~\cite{LeinsterShulman},
they showed that it determines the magnitude in favourable circumstances,
and they specialised the definition to obtain
the magnitude homology of metric spaces.
Magnitude homology has shown itself
to be an important extension and refinement of magnitude,
with many characteristic features of homology theories
and categorification:
\begin{itemize}
	\item
	Magnitude homology of graphs has properties
	(such as K\"unneth and Mayer-Vietoris theorems) that categorify
	and explain properties of magnitude (such as the product rule 
	and inclusion-exclusion formula)~\cite{HepworthWillerton}.
	\item
	The phenomenon of alternating coefficients in the magnitude
	of certain graphs was explained by the notion of
	{diagonal graphs} in~\cite[Section~7]{HepworthWillerton}.
	\item
	Graphs (and therefore metric spaces)
	with the same magnitude can have distinct
	magnitude homology groups.  The first known example of such a pair
	is the $4\times 4$ rook's graph 
	and the Shrikhande graph,
	and is due to Yuzhou Gu.  See Appendix~A of Gu's paper~\cite{Gu}
	and also the comments at the blogpost~\cite{Willerton}.
	\item
	Magnitude homology of graphs (and therefore metric spaces)
	can contain torsion, and in particular magnitude
	homology is not determined by its ranks.
	The first example of this was obtained by Kaneta and
	Yoshinaga in~\cite[Corollary~5.12]{KanetaYoshinaga}.
	\item
	A metric space $X$ is
	Menger convex if and only if its magnitude homology groups
	vanish in (homological) 
	degree $1$~\cite[Corollary 4.5]{LeinsterShulman}.
\end{itemize}
We would also like to mention recent work of Otter~\cite{otter},
which establishes a connection between magnitude homology and topology.
Otter introduces \emph{blurred magnitude homology} of metric spaces, 
a persistent version of the theory, and shows that a certain inverse
limit of the blurred magnitude homology produces the \emph{Vietoris homology}.
The latter is a homology theory for metric spaces
that coincides with singular homology in certain cases,
for example for compact Riemannian manifolds.

\subsection{Metric spaces}
Our first results are in the setting of 
\emph{generalised metric spaces}~\cite{Lawvere,LawvereReprint}. 
We introduce the magnitude cohomology of a generalised metric space $X$,
which is a bigraded abelian group $\MH^\ast_\ast(X)$
consisting of groups $\MH_\ell^k(X)$ where $k=0,1,2,\ldots$ 
and $\ell\in[0,\infty)$.
It is equipped with a product operation
\[
	\MH^j_\ell(X)\otimes \MH^k_m(X)
	\longrightarrow
	\MH^{j+k}_{\ell+m}(X)
\]
that gives it the structure of an associative unital ring.
This ring structure is in general noncommutative,
as we show in Proposition~\ref{proposition-noncommutative}.
In Theorem~\ref{theorem-recovery} we give a 
`recovery theorem' which shows that for a large class of metric
spaces $X$, including all graphs and finite metric spaces, 
the magnitude cohomology ring $\MH^\ast_\ast(X)$
is sufficient to determine $X$ precisely.
This is in stark contrast to the situation for magnitude homology, 
or for magnitude cohomology without the ring structure, 
where for example any two trees with
the same number of vertices have isomorphic magnitude homology.

\subsection{Enriched categories}
The next part of the paper deals with magnitude cohomology
of enriched categories.  For this we fix a symmetric monoidal
semicartesian category $\V$ and a strong monoidal functor
$\Sigma\colon\V\to\A$ into a closed symmetric monoidal
abelian category $\A$.  In this situation 
Leinster and Shulman defined the \emph{magnitude homology}
$H^\Sigma_\ast(X)$ of a $\V$-category $X$~\cite{LeinsterShulman}.
We define the dual theory, 
the \emph{magnitude cohomology} $H^\ast_\Sigma(X)$ of 
a $\V$-category $X$, and we equip it with 
a product
\[
	H^j_\Sigma(X)\otimes H^k_\Sigma(X)\longrightarrow H^{j+k}_\Sigma(X)
\]
and unit $1_\A\to H^0_\Sigma(X)$ that make it into an associative
unital graded ring in $\A$.
By choosing appropriate $\V$, $\A$ and $\Sigma$,
we obtain magnitude cohomology rings for posets,
small categories, and generalised metric spaces,
and we show that these respectively
recover the cohomology of the order complex
with the cup product, the cohomology of the classifying space
with the cup product, and the magnitude cohomology ring
respectively.
Note that in the first two cases the product is graded-commutative,
but in the third case it is not.  
In Theorem~\ref{theorem-commutativity}
we explain this phenomenon by showing that in general, 
when the enriching category $\V$ is cartesian,
the magnitude cohomology ring is graded-commutative.  
The enriching categories for posets and small categories
\emph{are} cartesian, while the enriching category for
generalised metric spaces is \emph{not}.

\subsection{Graphs}
Finally we specialise to finite graphs,
and we give complete computations of the magnitude cohomology ring
for several classes of graphs.
The magnitude cohomology rings in question are all highly nontrivial,
but nevertheless they all admit nice presentations.

In Theorem~\ref{theorem-diagonal} we identify
the \emph{diagonal part} of the magnitude cohomology ring,
given by the groups $\MH^k_k(G)$, as a quotient of the path algebra 
of the quiver obtained from $G$ by doubling the edges.
In~\cite[Section~7]{HepworthWillerton}
we introduced \emph{diagonal graphs}, which are 
graphs whose magnitude homology (and therefore cohomology)
is concentrated on this diagonal,
and we identified various large classes of diagonal graphs.
Work of Gu~\cite{Gu} has added to the known examples.
Theorem~\ref{theorem-diagonal} therefore 
gives a complete description of the magnitude
cohomology ring of any diagonal graph, and we make this
description explicit in the 
case of trees, complete graphs, and complete bipartite graphs.

In Theorem~\ref{theorem-cyclic} we give an explicit presentation 
of the magnitude cohomology rings of the odd cyclic graphs
(which are not diagonal).
This is based on Gu's computation of the magnitude homology
of odd cyclic graphs given in Theorem~4.6 of~\cite{Gu}.

\subsection{Organisation of the paper}
The paper is organised as follows.
In section~\ref{section-magcoh-metric} we define the magnitude
cohomology of metric spaces, together with its ring structure,
and we relate it to magnitude. 
In section~\ref{section-recovery} we state and prove
our recovery theorem.
In section~\ref{section-enriched-homology} we give a brief
exposition of Leinster and Shulman's theory of magnitude homology
of enriched categories. Then in section~\ref{section-enriched-cohomology}
we define magnitude cohomology of enriched categories,
we define its ring structure, and we prove that
it is commutative when the enriching category is cartesian.
In section~\ref{section-graph} we explore the case of finite graphs,
we express the diagonal part of magnitude cohomology as a quotient
of the path algebra of the associated quiver, and we compute
the magnitude cohomology ring for several classes of diagonal graphs.
In section~\ref{section-cyclic} we compute the magnitude cohomology
ring of odd cyclic graphs.

\subsection{Acknowledgements}
Thanks to Simon Willerton for some useful comments, and to Tom Leinster
for explaining the connection between dagger categories and symmetry
in generalised metric spaces.

\section{Magnitude cohomology of metric spaces}
\label{section-magcoh-metric}

In this section we recall the magnitude homology of generalised metric spaces,
and we then define their magnitude cohomology
and equip it with the structure of a graded associative unital ring.
We show that this ring is typically not graded commutative,
and we show how to recover the magnitude from the magnitude cohomology.

We work with {generalised metric spaces} in the sense
of Lawvere~\cite{Lawvere,LawvereReprint}.
Recall that a \emph{generalised metric space}, 
or \emph{extended pseudo-quasi-metric space}, 
is defined in the same
way as a metric space, except that the metric takes values
in $[0,\infty]$ (extended), distances between distinct points
may be $0$ (pseudo), and the distance from $a$ to $b$
need not equal the distance from $b$ to $a$ (quasi).
Generalised metric spaces are the same thing as categories
enriched in $[0,\infty]$. The objects of the enriched category
correspond to the points of the space, and the morphism
object $X(a,b)\in[0,\infty]$ corresponds to the distance from $a$ to $b$.
We will work with generalised metric spaces where possible,
restricting to extended quasi-metric spaces where necessary.

\begin{definition}[Magnitude homology]
\label{definition-magnitude-homology}
We recall the definition of magnitude homology
from sections~2 and~3 of~\cite{HepworthWillerton}
(for graphs) 
and section~3 of~\cite{LeinsterShulman}
(for arbitrary metric spaces).

Let $X$ be a generalised metric space.
A $k$-\emph{simplex} or just \emph{simplex} 
in $X$ is a tuple $(x_0,\ldots,x_k)$
of elements of $X$ in which consecutive entries are distinct,
i.e.~$x_0\neq x_1\neq\cdots\neq x_k$.
The \emph{degree} of the simplex $(x_0,\ldots,x_k)$ is $k$,
and its \emph{length} is
\[
	\ell(x_0,\ldots,x_k)
	=d(x_0,x_1)+\cdots+d(x_{k-1},x_k).
\]
The \emph{magnitude chain complex} of $X$, denoted $\MC_{\ast,\ast}(X)$,
is the chain complex of $\R$-graded abelian groups 
defined as follows.
The $k$-chains in degree $\ell$, denoted $\MC_{k,\ell}(X)$,
is defined to be the free abelian group on the simplices
of degree $k$ and length $\ell$:
\[
	\MC_{k,\ell}(X)
	=
	\Z\left\{
		(x_0,\ldots,x_k)\in X^{k+1}
		\mid	
		\ell(x_0,\ldots,x_k)=\ell,\ 
		x_0\neq x_1\neq \cdots\neq x_k
	\right\}
\]
The differential
\[
	\partial\colon\MC_{k,l}(X)\longrightarrow \MC_{k-1,l}(X)
\]
is defined by
\[
	\partial=-\partial_1+\partial_2-\cdots+(-1)^{k-1}\partial_{k-1}
\]
where
\[
	\partial_i(x_0,\ldots,x_k)=
	\left\{
	\begin{array}{ll}
		(x_0,\ldots,\widehat{x_i},\ldots,x_k)
		& \text{if }d(x_{i-1},x_{i+1})=d(x_{i-1},x_i)+d(x_i,x_{i+1}),
		\\
		0
		&
		\text{otherwise}.
	\end{array}
	\right.
\]
The \emph{magnitude homology} $\MH_{\ast,\ast}(X)$
of $X$ is the homology of the magnitude chain complex:
\[
	\MH_{\ast,\ell}(X)=H_\ast(\MC_{\ast,\ell}(X)).
\]
Note that in~\cite{LeinsterShulman} $\MH_{k,\ell}(X)$
is denoted by $H_{k,\ell}(X)$.
If $f\colon X\to Y$ is a map of generalised metric spaces
that does not increase distances, i.e.~$d_Y(f(x),f(x'))\leqslant
d_X(x,x')$ for all $x,x'\in X$,
then the \emph{induced chain map}
\[ f_\#\colon\MC_{\ast,\ast}(X)\to \MC_{\ast,\ast}(Y)\]
is defined by
\[
	f_\#(x_0,\ldots,x_k)
	=
	\left\{\begin{array}{ll}
		(f(x_0),\ldots,f(x_k))
		&
		\text{if }\ell(f(x_0),\ldots,f(x_k))=\ell(x_0,\ldots,x_k)
		\\
		0
		&
		\text{otherwise}.
	\end{array}\right.
\]
The \emph{induced map in homology} is the map
\[
	f_\ast\colon\MH_{\ast,\ast}(X)\longrightarrow\MH_{\ast,\ast}(Y)
\]
obtained from $f_\#$.
\end{definition}

We now define magnitude \emph{co}homology
by dualising the above definition and equipping
it with a product structure.

\begin{definition}[Magnitude cohomology]
\label{definition-magcoh}
Let $X$ be a generalised metric space.
The \emph{magnitude cochain complex of $X$} is the 
dual to the magnitude chain complex:
\[
	\MC_\ell^\ast(X)=\Hom(\MC_{\ast,\ell}(X),\Z).
\]
The \emph{magnitude cohomology} of $X$, denoted $\MH_\ast^\ast(X)$,
is the cohomology of the magnitude cochains:
\[
	\MH_\ell^\ast(X) = H^\ast(\MC_\ell^\ast(X)).
\]
If $f\colon X\to Y$ is a non-increasing map of 
generalised metric spaces, then the dual of
$f_\#$ gives the \emph{induced cochain map}
\[f^\#\colon\MC_\ast^\ast(X)\longrightarrow \MC_\ast^\ast(Y)\]
and the \emph{induced map in cohomology}
\[f^\ast\colon\MH_\ast^\ast(X)\longrightarrow\MH_\ast^\ast(Y).\]
Given $\varphi\in\MC^{k_1}_{\ell_1}(X)$
and $\psi\in\MC^{k_2}_{\ell_2}(X)$,
the \emph{product} 
$\varphi\cutprod\psi \in \MC^{k_1+k_2}_{\ell_1+\ell_2}(X)$
is defined on
$(x_0,\ldots,x_{k_1+k_2})\in \MC_{k_1+k_2,\ell_1+\ell_2}(X)$
by 
\[
	(\varphi\cutprod\psi) (x_0,\ldots,x_{k_1+k_2})
	=
	\varphi(x_0,\ldots,x_{k_1})\cdot \psi(x_{k_1},\ldots,x_{k_1+k_2})
\]
if the lengths are compatible in the sense that 
$\ell(x_0,\ldots,x_{k_1})=\ell_1$ and
$\ell(x_{k_1},\ldots,x_{k_1+k_2})=\ell_2$,
and by 
\[
	(\varphi\cutprod\psi) (x_0,\ldots,x_{k_1+k_2})
	=
	0
\]
otherwise.
The \emph{unit} $u\in\MC^0_0(X)$ is the cochain defined by the rule
\[
	u(x_0) = 1
\]
for every $0$-simplex $(x_0)$.
The reader may readily verify that the product
is strictly associative and unital, with unit $u$,
and that it satisfies the Leibniz rule
\[
	\partial^\ast(\varphi\cutprod\psi)
	=\partial^\ast\varphi\cutprod\psi 
	+ (-1)^{k_1}\varphi\cutprod\partial^\ast\psi.
\]
Consequently, there is an induced product on magnitude cohomology
\[
	\MH_{\ell_1}^{k_1}(X)\otimes
	\MH_{\ell_2}^{k_2}(X)
	\longrightarrow
	\MH_{\ell_1+\ell_2}^{k_1+k_2}(X),
	\quad
	\alpha\otimes\beta\longmapsto \alpha\cutprod\beta
\]
that makes $\MH^\ast_\ast(X)$ into a unital, associative
bigraded ring with unit $1=[u]\in \MH^0_0(X)$.
We refer to $\MH^\ast_\ast(X)$, equipped with the
product, as the \emph{magnitude cohomology ring}.
The assignment $X\mapsto\MH^\ast_\ast(X)$,
$f\mapsto f^\ast$ is a contravariant functor from the category
of metric spaces and non-increasing maps into the category
of unital associative bigraded rings.
\end{definition}

Our definition of the product on magnitude cohomology
is similar to the definition of the cup-product
in singular cohomology. Compare
Definition~\ref{definition-magcoh} with 
Section~3.2 of~\cite{Hatcher}, say.  
But in fact we know of no direct relationship between the two,
and moreover we will see an important difference in the next proposition.

Recall from~\cite{LeinsterShulman}
that elements $x,y$ in a generalised metric space
are \emph{adjacent} if $d(x,y)$ is nonzero and finite and
$d(x,y) = d(x,a) + d(a,y)\implies a=x\text{ or }a=y$.
Observe that any graph with at least one edge,
and any finite metric space with at least two points,
contains at least one adjacent pair.

\begin{proposition}
\label{proposition-noncommutative}
Suppose that $X$ is an extended quasi-metric space 
containing an adjacent pair $(x,y)$.
Then $\MH^\ast_\ast(X)$ is not graded-commutative.
\end{proposition}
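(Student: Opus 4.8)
The plan is to exhibit two explicit cohomology classes whose products disagree in the two orders. I would take the adjacent pair $(x,y)$ and set $\ell = d(x,y)$, which by hypothesis lies in $(0,\infty)$, and then build one class in bidegree $(1,\ell)$ and one in bidegree $(0,0)$.

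First I would produce a nonzero class in degree $1$. Let $\varphi\in\MC^1_\ell(X)$ be the cochain dual to the $1$-simplex $(x,y)$, so that $\varphi(a_0,a_1)=1$ when $(a_0,a_1)=(x,y)$ and $\varphi(a_0,a_1)=0$ otherwise. I claim $\varphi$ is a cocycle. Indeed, on a $2$-simplex $(a_0,a_1,a_2)$ of length $\ell$ the value $(\partial^\ast\varphi)(a_0,a_1,a_2)$ equals $-\varphi(a_0,a_2)$ when $d(a_0,a_2)=d(a_0,a_1)+d(a_1,a_2)$ and is $0$ otherwise; for it to be nonzero we would need $(a_0,a_2)=(x,y)$ together with $d(x,y)=d(x,a_1)+d(a_1,y)$, whereupon adjacency of $(x,y)$ forces $a_1\in\{x,y\}$, contradicting $a_0\neq a_1\neq a_2$. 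Hence $\partial^\ast\varphi=0$. Moreover $[\varphi]\neq 0$: since $\ell>0$ there are no $0$-simplices of length $\ell$, so $\MC^0_\ell(X)=0$ and there are no coboundaries in this bidegree.

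Next I would produce a degree-$0$ class acting asymmetrically on $\varphi$. Let $\chi\in\MC^0_0(X)$ be the cochain dual to the $0$-simplex $(x)$. Because the differential $\partial$ vanishes on $1$-chains (the defining alternating sum $-\partial_1+\cdots+(-1)^{k-1}\partial_{k-1}$ is empty when $k=1$), its dual $\partial^\ast$ vanishes on $0$-cochains, so every $0$-cochain, $\chi$ in particular, is a cocycle. I then compute the two cochain-level products directly from the definition. On a $1$-simplex $(a_0,a_1)$ one has $(\chi\cutprod\varphi)(a_0,a_1)=\chi(a_0)\,\varphi(a_0,a_1)$, which is nonzero exactly when $(a_0,a_1)=(x,y)$, so $\chi\cutprod\varphi=\varphi$; whereas $(\varphi\cutprod\chi)(a_0,a_1)=\varphi(a_0,a_1)\,\chi(a_1)$ is nonzero only when $(a_0,a_1)=(x,y)$ and $a_1=x$, i.e.\ only if $x=y$, so $\varphi\cutprod\chi=0$.

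Passing to cohomology, these yield $[\chi]\cutprod[\varphi]=[\varphi]\neq 0$ while $[\varphi]\cutprod[\chi]=0$, both living in bidegree $(1,\ell)$. Since graded-commutativity in bidegrees $(0,0)$ and $(1,\ell)$ reduces to the plain identity $[\chi]\cutprod[\varphi]=[\varphi]\cutprod[\chi]$, this shows $\MH^\ast_\ast(X)$ is not graded-commutative. The only genuinely substantive step is the cocycle claim for $\varphi$, and it is exactly here that the adjacency hypothesis is indispensable: adjacency is precisely what rules out length-$\ell$ $2$-simplices $(x,a_1,y)$ with an interior vertex, which is what makes $\varphi$ closed.
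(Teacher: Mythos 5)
Your proof is correct, and it takes a slightly different route from the paper's. The paper also exhibits the cocycle dual to $(x,y)$ (as part of a basis of $\MH^1_\ast(X)$ obtained by dualising Theorem~7.4 of Leinster--Shulman), but it then demonstrates noncommutativity between \emph{two degree-one} classes $\uu_{xy}$ and $\uu_{yx}$, detecting the discrepancy by Kronecker-pairing the products against the homology class of the $2$-cycle $(x,y,x)$. You instead play a degree-zero idempotent (the class of the dual of $(x)$) against the degree-one class, and you work entirely at the cochain level: one product is the zero cochain on the nose, the other is a nonzero cocycle in a bidegree where $\MC^0_\ell(X)=0$ kills all coboundaries. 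Your version is more elementary --- no Kronecker pairing, no homology computation, no appeal to the dualised basis theorem --- and it has a genuine technical advantage in the quasi-metric setting: the paper's argument implicitly needs $(y,x)$ to be adjacent and $d(y,x)$ to be finite (otherwise $\uu_{yx}$ and the cycle $(x,y,x)$ are not available), which is automatic for symmetric metrics but not for quasi-metrics, whereas your argument uses only the single adjacent pair $(x,y)$. What the paper's version buys in exchange is a stronger qualitative conclusion: it shows the failure of commutativity already among positive-degree classes, whereas your counterexample involves a degree-zero class, where graded-commutativity degenerates to plain commutativity; your observation is essentially the asymmetry of the $\MH^0_0(X)$-bimodule structure on $\MH^1_\ell(X)$ recorded in Proposition~\ref{proposition-zeroone}. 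All the individual steps you give --- the cocycle verification via adjacency, the vanishing of $\partial^\ast$ on $0$-cochains because the differential on $1$-chains is an empty sum, and the two product computations --- are accurate.
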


\begin{proof}
The proof of \cite[Theorem 4.3]{LeinsterShulman} 
(compare with \cite[Proposition~2.9]{HepworthWillerton})
can be dualised to show that if $(x,y)$ is an adjacent pair in $X$,
then there is a cocycle $\varphi_{xy}\in\MC^1_{d(x,y)}(X)$
defined by
\[
	\varphi_{xy}(x',y') = 
	\left\{\begin{array}{ll}
		1 & \text{if }(x',y')=(x,y),
		\\
		0 & \text{otherwise},	
	\end{array}\right.
\]
and that the cohomology classes $\uu_{xy}=[\varphi_{xy}]$
for $(x,y)$ adjacent form a basis of $\MH^1_\ast(X)$.
Note that the proof of Theorem~4.3 of~\cite{LeinsterShulman}
is stated only for metric spaces, but that the proof extends
to the extended quasi-metric case without change.
Given such an adjacent pair $(x,y)$, one may check that
$(x,y,x)\in\MC_{2,d(x,y)+d(y,x)}(X)$ is a cycle, and that
\[
	\langle \uu_{xy}\cutprod \uu_{yx},[(x,y,x)]\rangle = 1,
	\qquad
	\langle \uu_{yx}\cutprod \uu_{xy},[(x,y,x)]\rangle = 0,
\]
so that $\uu_{xy}\cutprod \uu_{yx}$ and $\uu_{yx}\cutprod \uu_{xy}$ are not equal
up to any choice of sign.
Here $\langle-,-\rangle$ denotes the Kronecker pairing
between homology and cohomology; see Remark~\ref{remark-universal}
below.
\end{proof}

Just as magnitude homology is a categorification of the magnitude
of graphs and metric spaces (see \cite[Theorem 2.8]{HepworthWillerton} and
\cite[Theorem~3.5]{LeinsterShulman}), 
the same is true of magnitude cohomology.
To see this, we use the version of the magnitude 
from~\cite{LeinsterShulman} that takes values
in the field $\hahn$ of \emph{Novikov series}.
For details on this
we refer the reader to \cite{LeinsterShulman}, in particular
Definition~3.1, Theorem~3.2, and the discussion that precedes them.

\begin{theorem}
\label{theorem-categorification}
Let $X$ be a finite quasi-metric space.
Then
\[
	\# X = \sum_{\ell\geqslant 0}\sum_{k=0}^\infty
	(-1)^k \cdot \rank(\MH_{\ell}^k(X)) \cdot q^\ell,
\]
where each sum over $k$ is finite, and the infinite sum over $\ell$
converges in the topology of $\hahn$.
\end{theorem}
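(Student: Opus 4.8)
The plan is to deduce the result from the analogous categorification theorem for magnitude \emph{homology}, which is already available to us. By \cite[Theorem~2.8]{HepworthWillerton} (for graphs) and \cite[Corollary~6.28]{LeinsterShulman} (in general) we have
\[
	\# X = \sum_{\ell\geqslant 0}\sum_{k=0}^\infty (-1)^k \cdot \rank(\MH_{k,\ell}(X)) \cdot q^\ell,
\]
with each inner sum finite and the outer sum convergent in $\hahn$. It therefore suffices to prove that $\rank(\MH^k_\ell(X)) = \rank(\MH_{k,\ell}(X))$ for every $k$ and $\ell$; the finiteness and convergence statements then transfer verbatim from the homology formula.

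To establish the equality of ranks, I would first observe that because $X$ is finite, each chain group $\MC_{k,\ell}(X)$ is a finitely generated free abelian group, and that for each fixed $\ell$ only finitely many degrees $k$ contribute. Indeed, since $X$ is a genuine quasi-metric space, distinct points lie at strictly positive distance; writing $\delta>0$ for the smallest nonzero distance among the finitely many pairs of points, every simplex of length $\ell$ has degree at most $\ell/\delta$. With this in hand, the cochain complex $\MC^\ast_\ell(X)=\Hom(\MC_{\ast,\ell}(X),\Z)$ is a bounded complex of finitely generated free abelian groups, and the universal coefficient theorem for cohomology (see e.g.\ Section~3.1 of~\cite{Hatcher}) yields a short exact sequence
\[
	0\to\mathrm{Ext}(\MH_{k-1,\ell}(X),\Z)\to\MH^k_\ell(X)\to\Hom(\MH_{k,\ell}(X),\Z)\to 0.
\]
Here the subgroup on the left is the torsion subgroup of $\MH_{k-1,\ell}(X)$, hence finite, while the quotient on the right is free abelian of rank $\rank(\MH_{k,\ell}(X))$. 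Reading off ranks gives $\rank(\MH^k_\ell(X))=\rank(\MH_{k,\ell}(X))$, as required. Substituting this identity into the homology formula above then produces precisely the claimed expression for $\#X$.

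I do not expect a substantial obstacle here, as the argument is a routine application of the universal coefficient theorem layered on top of the already-established homology categorification. The only points demanding care are bookkeeping: verifying that finiteness of $X$ makes the complexes $\MC_{\ast,\ell}(X)$ degreewise finitely generated and bounded in $k$ for each $\ell$, so that the universal coefficient theorem applies term by term and the rank identity may be substituted into the Hahn-series sum without disturbing its convergence. Alternatively, one could avoid the universal coefficient theorem altogether by arguing via Euler characteristics: for each fixed $\ell$ the alternating sum $\sum_k(-1)^k\rank$ is unchanged on passing from chains to homology, and dualising a bounded complex of finitely generated free abelian groups preserves ranks degreewise, so that $\sum_k(-1)^k\rank(\MH^k_\ell(X))=\sum_k(-1)^k\rank(\MC^k_\ell(X))=\sum_k(-1)^k\rank(\MC_{k,\ell}(X))=\sum_k(-1)^k\rank(\MH_{k,\ell}(X))$ directly. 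Either route closes the argument.
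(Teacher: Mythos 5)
Your proposal is correct and follows essentially the same route as the paper: reduce to the known homology categorification (the paper cites Theorem~5.11 of Leinster--Shulman) and then identify $\rank(\MH^k_\ell(X))$ with $\rank(\MH_{k,\ell}(X))$ via the universal coefficient sequence, using finite generation of the chain groups. The extra Euler-characteristic alternative is a fine bonus but not needed.
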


\begin{proof}
Theorem~3.5 of~\cite{LeinsterShulman} gives the same result but
with 
$\rank(\MH_{k,\ell}(X))$ in place of $\rank(\MH^k_\ell(X))$.
That the two ranks are equal follows from 
the universal coefficient sequence of Remark~\ref{remark-universal}
below,
together with the fact that 
$\mathrm{Ext}(\MH_{k,\ell}(X),\Z)$ is finite
since $\MH_{k,\ell}(X)$ is finitely generated.
\end{proof}

We end this section with some remarks.

\begin{remark}[The universal coefficient sequence]
\label{remark-universal}
Magnitude homology and cohomology are related by a universal coefficient
sequence:
\begin{equation}
	0\to \mathrm{Ext}(\MH_{k-1,\ell}(X),\Z)
	\longrightarrow
	\MH^k_\ell(X)
	\longrightarrow
	\Hom(\MH_{k,\ell}(X),\Z)
	\longrightarrow
	0
\end{equation}
It is natural in $X$, and split, but not naturally split.
See Theorem~3.2 of~\cite{Hatcher}.
The second nontrivial arrow in this sequence determines 
a \emph{Kronecker pairing} that we denote
$
	\langle-,-\rangle
	\colon
	\MH^k_\ell(X)\otimes \MH_{k,\ell}(X)
	\to
	\Z
$.
\end{remark}

\begin{remark}[The two gradings]
Magnitude homology $\MH_{\ast,\ast}(X)$
and cohomology $\MH_\ast^\ast(X)$ each have two gradings,
which we usually specify as 
$\MH_{k,\ell}(X)$ and $\MH_\ell^k(X)$.
The first grading $k\in\N$ 
is the \emph{homological} or \emph{cohomological} grading,
and it comes from the grading on the underlying chain
and cochain complexes.
The second grading $\ell\in[0,\infty)$
is the \emph{length} or \emph{distance} grading,
and arises because the length of simplices is not changed
by the differential.
\end{remark}

\begin{remark}[Coefficients]
We could have defined magnitude homology and cohomology
{with coefficients} in an abelian group $A$ by the rules
\[
	\MH_{k,\ell}(X;A)=H_k(\MC_{\ast,\ell}(X)\otimes A),
	\qquad
	\MH_\ell^k(X;A)=H^k(\Hom(\MC_{\ast,\ell}(X), A))
\]
to much the same effect as the use of coefficients in singular
homology.  For the sake of simplicity we have chosen not to do so.
\end{remark}

\begin{remark}[Involutions]
The magnitude homology and cohomology 
of an extended pseudo-metric space can be equipped
with an involution given on simplices by
$(x_0,\ldots,x_k)\mapsto (-1)^\frac{k(k+1)}{2} (x_k,\ldots,x_0)$.
This makes the magnitude cohomology into a bigraded unital associative
ring with involution.
We have chosen not to investigate this structure here.

A key assumption for the involution to be defined is that
the metric space must be symmetric.
Now, a generalised metric space is symmetric if and only if it 
has the structure of a {dagger}-$[0,\infty]$-category.
(A \emph{dagger}-$\V$-category
is a $\V$-category $\mathcal{C}$ equipped with an involutive $\V$-functor
$\dagger\colon\mathcal{C}^\mathrm{op}\to \mathcal{C}$
that is the identity on objects.)
So dagger-$\V$-categories may be an appropriate setting
to which to extend the involution defined above.
\end{remark}

\section{The recovery theorem}
\label{section-recovery}

In this section we prove a `recovery' theorem showing 
that, for a large class of extended quasi-metric spaces,
the magnitude cohomology ring of the space determines the space itself
up to isometry.
In particular, if two such spaces have isomorphic magnitude cohomology
rings then the spaces themselves are isometric.
This holds in particular for finite metric spaces and for directed graphs.

\begin{theorem}[Recovery theorem]
\label{theorem-recovery}
Let $X$ be an extended quasi-metric space
for which $\inf\{d(a,b)\mid a,b\in X,\ a\neq b\}$
is positive.
Then $X$ is determined up to isometry by the
magnitude cohomology ring $\MH_\ast^\ast(X)$.
In particular, if $X$ and $X'$ are two such spaces,
and $\MH_\ast^\ast(X)\cong\MH_\ast^\ast(X')$
as bigraded rings, then $X$ and $X'$ are isometric.
\end{theorem}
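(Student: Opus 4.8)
The plan is to reconstruct the metric space $X$—its underlying set, together with all pairwise distances $d(a,b)$—purely from the ring $\MH^\ast_\ast(X)$, using the degree-$1$ part as the main handle. The hypothesis $\inf\{d(a,b)\mid a\neq b\}>0$ is exactly what makes every pair of distinct points \emph{adjacent}-like in a controlled way: there are no nontrivial ``intermediate'' points arbitrarily close to a given one, so the combinatorics of length-additivity $d(a,b)=d(a,c)+d(c,b)$ becomes rigid. By Proposition~\ref{proposition-noncommutative} and the computation in its proof, I may assume that $\MH^1_\ell(X)$ has a canonical basis given by the classes $\uu_{xy}=[\varphi_{xy}]$ indexed by adjacent pairs, sitting in length grading $\ell=d(x,y)$. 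The first key step is therefore to characterise, intrinsically in ring-theoretic terms, which elements of $\MH^1_\ast(X)$ are the basis classes $\uu_{xy}$ and to recover the point set from them.

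First I would recover the points of $X$ and the adjacency relation. Each basis class $\uu_{xy}$ is a ``directed edge'' from $x$ to $y$; the points should be recoverable as equivalence classes of these edges under ``shares a source'' or ``shares a target,'' and the crucial tool is the product. Concretely, for adjacent pairs the products $\uu_{xy}\cutprod\uu_{zw}$ are computed by the cup-type formula on $(x,y,w)$-shaped simplices and are nonzero essentially when $y=z$ (so the edges compose head-to-tail). This lets me detect, ring-theoretically, when the target of one edge equals the source of another, and hence to reconstruct the incidence structure of the directed graph of adjacencies, and thereby the point set. The second step is to recover \emph{all} distances, not merely distances between adjacent pairs: for arbitrary $a,b$ with $d(a,b)<\infty$, the positivity hypothesis forces any geodesic $a=z_0,z_1,\ldots,z_n=b$ to pass through finitely many points with each consecutive pair adjacent, so $d(a,b)$ is a sum $\sum d(z_{i-1},z_i)$ of lengths of basis classes. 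The length grading of a nonzero product $\uu_{z_0z_1}\cutprod\cdots\cutprod\uu_{z_{n-1}z_n}\in\MH^n_{d(a,b)}(X)$ then reads off $d(a,b)$, so distances are recovered as the length gradings of the appropriate products of degree-$1$ generators.

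The third step is to check that the reconstruction is intrinsic to the \emph{abstract} graded ring, i.e.\ independent of the identification with $\MH^\ast_\ast(X)$: a graded-ring isomorphism $\MH^\ast_\ast(X)\cong\MH^\ast_\ast(X')$ must send the canonical degree-$1$ basis of one to that of the other (up to the indexing), because that basis can be singled out by ring-theoretic and grading data alone, and consequently the two recovered spaces are isometric. I expect the main obstacle to be this intrinsic characterisation of the basis classes $\uu_{xy}$ together with the bookkeeping needed to recover non-adjacent distances: I must verify that the products of degree-$1$ generators that I use are genuinely nonzero in cohomology (not just at cochain level), which requires pairing them against explicit cycles of the form $(z_0,z_1,\ldots,z_n)$ via the Kronecker pairing of Remark~\ref{remark-universal}, exactly as in the proof of Proposition~\ref{proposition-noncommutative}; and I must rule out spurious nonzero products that could confuse the reconstruction. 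The positivity hypothesis should be used repeatedly here to guarantee that geodesics are finite chains of adjacent pairs and that the relevant simplices are the only contributors to the pairing.
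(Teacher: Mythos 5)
Your overall strategy---recover the point set and the adjacency data from low cohomological degrees, then reconstruct the remaining distances combinatorially using the positivity hypothesis---is the same as the paper's. But the central mechanism you propose for the first two steps does not work. You want to detect head-to-tail composability of adjacent pairs, and to read off $d(a,b)$ for non-adjacent $a,b$, from the \emph{nonvanishing} of products $\uu_{z_0z_1}\cutprod\cdots\cutprod\uu_{z_{n-1}z_n}$ of degree-$1$ classes. Such products are very often zero in cohomology even when the edges compose head-to-tail: if $y$ lies on a geodesic from $x$ to $z$ then $(x,y,z)$ is not a cycle, and e.g.\ in a tree, where $y$ is the \emph{unique} midpoint, the cochain $(x,y,z)^\ast$ is exactly $\pm\partial^\ast((x,z)^\ast)$, so $\uu_{xy}\cutprod\uu_{yz}=0$ whenever $x\neq z$ (compare the relation $\sum_{y\colon x\prec y\prec z}xyz=0$ in Theorem~\ref{theorem-diagonal}, and the relation $\uu_{xy}\uu_{yz}=0$ for $x\neq z$ in Theorem~\ref{theorem-cyclic}). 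So nonvanishing of these products cannot detect incidence, and the length grading of a product that is zero tells you nothing. A secondary problem is recovering the point set from degree-$1$ classes alone: a point belonging to no adjacent pair (e.g.\ a one-point space, or a point at infinite distance from everything) would be invisible to your construction.

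The paper circumvents both issues by never multiplying two degree-$1$ classes. It first identifies $\MH^0_0(X)\cong\Z^X$ as a ring and recovers the points as its \emph{primitive idempotents} $\delta_x$; it then identifies $\MH^1_\ell(X)\cong\Z^{\adj(X,\ell)}$ as a $\MH^0_0(X)$-bimodule and detects adjacency at distance $\ell$ by whether $e\cdot\MH^1_\ell(X)\cdot f\neq 0$ for primitive idempotents $e,f$ (Propositions~\ref{proposition-zeroone} and~\ref{proposition-recovery}); these bimodule products are computed at cochain level and their nonvanishing is unconditional. Finally, the non-adjacent distances are recovered by a purely metric statement (Lemma~\ref{lemma-recovery}): $d(a,b)$ is the minimum of $\sum d(x_{i-1},x_i)$ over chains of adjacent pairs from $a$ to $b$, the existence of such a chain being proved by iterated insertion of intermediate points, with termination guaranteed because the chain length is bounded by $d(a,b)$ divided by the positive infimum. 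Your sketch gestures at this last point but takes the existence of a finite adjacent chain realising $d(a,b)$ for granted; that existence is precisely what the positivity hypothesis is needed to prove, and it is a statement about the metric, not about cohomology. To repair your argument you would need to replace the degree-$1$ products by the idempotent-bimodule structure and supply the insertion argument.
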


The precise method by which $X$ is recovered from $\MH_\ast^\ast(X)$
will be spelled out in Remark~\ref{remark-recovery} below.
In a finite extended
quasi-metric space the nonzero distances have a nonzero minimum,
and so we obtain:

\begin{corollary}
\label{corollary-finite-metric-recovery}
If $X$ is a finite quasi-metric space, then $X$ is determined 
up to isometry by its magnitude cohomology ring $\MH^\ast_\ast(X)$.
\end{corollary}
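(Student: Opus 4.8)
The plan is to deduce Theorem~\ref{theorem-recovery} directly from the analogous statement for the magnitude cohomology ring, so the heart of the work is the recovery theorem itself. The key observation is that the hypothesis on a finite quasi-metric space $X$ is stronger than needed: if $X$ has finitely many points, then the set of \emph{nonzero} distances $\{d(a,b)\mid a,b\in X,\ a\neq b,\ d(a,b)\neq 0\}$ is a finite subset of $(0,\infty]$, hence attains a positive minimum whenever it is nonempty. First I would check that the infimum hypothesis of Theorem~\ref{theorem-recovery}, namely $\inf\{d(a,b)\mid a,b\in X,\ a\neq b\}>0$, is met. For a genuine (quasi-)metric space all distances between distinct points are strictly positive, and since there are only finitely many such pairs the infimum is a minimum over a finite set of positive reals, and is therefore positive.

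Having verified the hypothesis, I would simply invoke Theorem~\ref{theorem-recovery} to conclude that $X$ is determined up to isometry by $\MH^\ast_\ast(X)$, and that any finite quasi-metric space $X'$ with $\MH^\ast_\ast(X)\cong\MH^\ast_\ast(X')$ as graded rings is isometric to $X$. The two displayed conclusions of the corollary are exactly the specialisation of the two conclusions of the theorem to the finite case. One subtle point to address is the word ``metric'' versus ``quasi-metric'': the corollary as stated refers to a finite quasi-metric space, which already rules out the pseudo phenomenon of distinct points at distance $0$, so every pair of distinct points has strictly positive distance and the reduction above goes through verbatim. If one instead allowed the pseudo case, the minimum-over-a-finite-set argument would need to be restricted to those pairs whose distance is nonzero, but the hypothesis of the theorem would then fail, so the restriction to quasi-metric (non-pseudo) spaces is what makes the corollary immediate.

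The main obstacle, such as it is, is not in the corollary but lies upstream in the proof of Theorem~\ref{theorem-recovery} that the corollary quotes: one must describe an explicit procedure that reconstructs the point set and the metric of $X$ from the ring $\MH^\ast_\ast(X)$. I expect that procedure to identify the points of $X$ with the $\R$-graded components of $\MH^0_\ast(X)$ (the degree-zero, length-zero part being spanned by the $0$-simplices), and to read off adjacent pairs and their distances from the degree-one classes $\uu_{xy}=[\varphi_{xy}]$ of Proposition~\ref{proposition-noncommutative}, which form a basis of $\MH^1_\ast(X)$ and carry their lengths $d(x,y)$ in the distance grading. The ring structure then pins down which degree-one generators compose, allowing one to recover arbitrary distances as minimal sums of adjacent steps, precisely the content flagged for Remark~\ref{remark-recovery}. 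Granting that reconstruction, the corollary follows with no further calculation beyond the finiteness remark above.
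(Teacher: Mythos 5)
Your proposal is correct and matches the paper's own (one-line) deduction: a finite quasi-metric space has only finitely many pairs of distinct points, all at strictly positive distance, so the infimum hypothesis of Theorem~\ref{theorem-recovery} holds and the corollary follows immediately. The additional discussion of how the recovery theorem itself works is accurate but not needed for this step.
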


A directed graph determines an 
extended quasi-metric space in which all distances
are at least $1$ \emph{via} the shortest path metric, 
and this metric in turn determines the graph up to isomorphism.
Thus Theorem~\ref{theorem-recovery} gives us:

\begin{corollary}
A directed graph $G$ is determined up to isomorphism
by its magnitude cohomology ring $\MH^\ast_\ast(G)$.
\end{corollary}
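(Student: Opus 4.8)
The plan is to extract the isometry type of $X$ from $\MH^\ast_\ast(X)$ in three stages—first the point set, then the adjacent pairs together with their distances, and finally the whole metric—each stage using only the bigraded ring structure. Write $\varepsilon=\inf\{d(a,b)\mid a\ne b\}$, which is positive by hypothesis, so in particular distinct points lie at positive distance. First I would recover the points. In cohomological degree $0$ we have $\MC^0_\ell(X)=\Hom(\MC_{0,\ell}(X),\Z)$, which vanishes for $\ell>0$ and, for $\ell=0$, is the ring $\Hom(\Z\{X\},\Z)\cong\prod_{x\in X}\Z$ of $\Z$-valued functions on $X$ under pointwise multiplication. Since $\partial$ is zero on $1$-chains, every $0$-cochain is a cocycle, so $\MH^0_0(X)$ is exactly this function ring, with unit the all-ones cochain $u$. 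Its minimal nonzero idempotents are precisely the indicator functions $e_x$ of single points, so $X$ is recovered as the set of minimal idempotents of the degree-$(0,0)$ component of the ring.

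Next I would recover the adjacency relation and the distance between each adjacent pair, using the action of $\MH^0_0(X)$ on $\MH^1_\ast(X)$ by left and right multiplication. By the computation in the proof of Proposition~\ref{proposition-noncommutative}, the classes $\uu_{xy}$ for adjacent pairs $(x,y)$ form a basis of $\MH^1_\ast(X)$, and $\uu_{xy}$ lies in length grading $\ell=d(x,y)$. A direct cochain calculation with the product formula gives $e_x\cutprod\uu_{zw}\cutprod e_y=\uu_{zw}$ when $(x,y)=(z,w)$ and $0$ otherwise. Hence for ordered points $(x,y)$ and a length $\ell$, the group $e_x\cutprod\MH^1_\ell(X)\cutprod e_y$ is free of rank one exactly when $(x,y)$ is adjacent with $d(x,y)=\ell$, and is zero otherwise; this reads off the adjacency relation and the distance function on adjacent pairs purely from the bigraded ring.

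The main obstacle is the third stage: recovering the full metric from the adjacent-pair data. I would prove that for $x\ne y$ one has $d(x,y)=\inf\{\ell(x_0,\dots,x_n)\}$ over all chains $x=x_0,\dots,x_n=y$ with each consecutive pair adjacent (and $\inf\emptyset=\infty$), while $d(x,x)=0$. The inequality $d(x,y)\le\ell(x_0,\dots,x_n)$ is the triangle inequality. For the reverse inequality when $d(x,y)<\infty$, I would show by induction that a geodesic adjacent chain exists: if $(x,y)$ is adjacent we are done, and otherwise there is $a\ne x,y$ with $d(x,y)=d(x,a)+d(a,y)$, both summands finite and at least $\varepsilon$, hence each at most $d(x,y)-\varepsilon$; applying the inductive hypothesis to $(x,a)$ and $(a,y)$ and concatenating yields an adjacent chain of length exactly $d(x,y)$. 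Positivity of $\varepsilon$ is precisely what makes the induction terminate, the number of hops being bounded by $d(x,y)/\varepsilon$, and it is exactly where the hypothesis is needed; without it the conclusion fails, since for instance the real half-line contains no adjacent pairs at all. This expresses $d$ entirely in terms of data already recovered.

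Finally, combining the three stages reconstructs $X$ together with its metric from $\MH^\ast_\ast(X)$, and since every step is phrased purely in terms of the bigraded ring, any isomorphism $\MH^\ast_\ast(X)\cong\MH^\ast_\ast(X')$ of bigraded rings carries the reconstruction of $X$ onto that of $X'$ and hence induces an isometry; this also makes explicit the recovery procedure promised in Remark~\ref{remark-recovery}. The two corollaries then follow at once: a finite quasi-metric space has a positive minimum nonzero distance, and the shortest-path metric of a directed graph has all nonzero distances $\ge 1$ and itself determines the graph up to isomorphism.
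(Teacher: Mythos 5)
Your proof is correct and follows essentially the same route as the paper: you reconstruct the point set from the primitive idempotents of $\MH^0_0(X)$, read off adjacency and the distances between adjacent pairs from the $\MH^0_0(X)$-bimodule structure on $\MH^1_\ell(X)$, and recover the full metric as the infimum of lengths of adjacent chains using positivity of $\varepsilon$ to make the induction terminate, which is precisely the content of Propositions~\ref{proposition-zeroone} and~\ref{proposition-recovery} and Lemma~\ref{lemma-recovery}, before specialising to the shortest-path metric of a directed graph exactly as in the paper's deduction of the corollary from Theorem~\ref{theorem-recovery}. The only cosmetic differences are that you compute $\MH^0$ directly rather than citing the dualised Leinster--Shulman results, and you organise the construction of a geodesic adjacent chain as a binary recursion rather than iterative insertion.
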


The results just presented are 
in extreme contrast with the situation for magnitude homology,
or for magnitude cohomology without the ring structure.
For example, any two trees with the same
number of vertices have isomorphic magnitude homology and cohomology 
groups~\cite[Corollary~6.8]{HepworthWillerton}.

The following example shows that it is impossible to
extend Corollary~\ref{corollary-finite-metric-recovery}
to arbitrary metric spaces.  

\begin{example}
\label{example-compact}
Kaneta and Yoshinaga~\cite[Corollary~5.3]{KanetaYoshinaga}
and Jubin~\cite[Corollary~7.3]{Jubin} have independently shown
that if $X$ is a convex subset of Euclidean space,
then $\MH_{k,\ell}(X)=0$ except when $k=\ell=0$.
The same conclusion therefore holds for magnitude cohomology,
so that $\MH^\ast_\ast(X)$ is zero in all bidegrees except for
$\MH^0_0(X)\cong\Z^X$.  
Thus the magnitude cohomology ring of convex subsets 
of Euclidean space determines only the cardinality of the underlying
set $X$, and cannot recover the metric on $X$.
\end{example}

We now move on to the proof of Theorem~\ref{theorem-recovery}.
To obtain the recovery result it is in fact enough to look in
homological degrees $k=0,1$, and so we begin 
by determining $\MH_\ast^\ast(X)$
in these degrees. This is based on the homological results
in~\cite[Section~4]{LeinsterShulman}.
Recall that a pair $x,y\in X$ is 
\emph{adjacent} if $d(x,y)$ is nonzero and finite, and
$d(x,y) = d(x,a) + d(a,y)\implies a=x\text{ or }a=y$.
We write $\adj(X,\ell)$ for the set of \emph{ordered}
pairs $(x,y)$ in $X$ such that $x,y$ are adjacent and $d(x,y)=\ell$.
Given a set $A$, we will write $\Z^A$ for the abelian
group of all functions $A\to\Z$ under pointwise addition.

\begin{proposition}\label{proposition-zeroone}
Let $X$ be an extended quasi-metric space.
Then $\MH^0_\ell(X)=0$ if $\ell>0$, and there are natural isomorphisms:
\begin{align}
	\MH^0_0(X)&\cong \Z^X 
	\label{isomorphism-zero}
	\\
	\MH^1_\ell(X)&\cong\Z^{\adj(X,\ell)}
	\label{isomorphism-one}
\end{align}
The isomorphism~\eqref{isomorphism-zero} 
is an isomorphism of rings, where $\Z^X$ is equipped
with pointwise multiplication.
And the isomorphism~\eqref{isomorphism-one}
identifies the $\MH_0^0(X)$-bimodule $\MH^1_\ell(X)$
with the $\Z^X$-bimodule $\Z^{\adj(X,\ell)}$ determined
by the rule
\[
	(f\cdot m \cdot g)(x,y)
	=
	f(x)m(x,y)g(y)
\] 
for $f,g\in\Z^X$ and $m\in\Z^{\adj(X,\ell)}$.
\end{proposition}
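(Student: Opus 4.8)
The plan is to compute the cohomology directly in low cohomological degrees by exploiting the explicit description of the magnitude cochain complex, and then to track how the product interacts with these computations. Since $\MH^k_\ell(X) = H^k(\MC^\ast_\ell(X))$ where $\MC^k_\ell(X) = \Hom(\MC_{k,\ell}(X),\Z)$, a cochain in $\MC^0_\ell(X)$ is just a function on $0$-simplices $(x_0)$, all of which have length $0$; hence $\MC^0_\ell(X) = 0$ for $\ell > 0$ and $\MC^0_0(X) = \Z^X$. For the kernel of $\partial^\ast\colon \MC^0_\ell(X) \to \MC^1_\ell(X)$, I would observe that in degree $\ell = 0$ there are no $1$-simplices of length $0$ (consecutive entries are distinct, so any $1$-simplex has positive length), so the target is zero and $\MH^0_0(X) = \Z^X$ outright, which establishes~\eqref{isomorphism-zero} at the group level.

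Next I would analyse $\MH^1_\ell(X)$. First I would identify $Z^1_\ell = \ker(\partial^\ast\colon \MC^1_\ell \to \MC^2_\ell)$ and $B^1_\ell = \operatorname{im}(\partial^\ast\colon \MC^0_\ell \to \MC^1_\ell)$. For $\ell > 0$ the coboundaries vanish since $\MC^0_\ell = 0$, so $\MH^1_\ell(X) = Z^1_\ell$. The cocycle condition, dualising the differential $\partial = -\partial_1$ on $2$-simplices, says precisely that a function $\varphi$ on $1$-simplices of length $\ell$ must vanish on the ``degenerate-in-length'' direction: $\varphi$ evaluated on any $1$-simplex $(x_0,x_2)$ arising as a face $\partial_1(x_0,x_1,x_2)$ (i.e.\ whenever $d(x_0,x_2) = d(x_0,x_1)+d(x_1,x_2)$) must be compatible. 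Working this out, the cocycles supported on a pair $(x,y)$ are exactly those for which $(x,y)$ admits no intermediate point, which is the adjacency condition. This is the same computation used in Proposition~\ref{proposition-noncommutative} to produce the cocycles $\varphi_{xy}$, so I would reuse that: the $\varphi_{xy}$ for $(x,y)\in\adj(X,\ell)$ form a basis of $\MH^1_\ell(X)$, giving the isomorphism~\eqref{isomorphism-one} sending $m$ to $\sum_{(x,y)} m(x,y)\varphi_{xy}$. I expect this cocycle analysis to be the main obstacle, since I must verify both that adjacency is exactly the condition making $\varphi_{xy}$ a cocycle and that these classes are linearly independent and span; invoking the dualised version of \cite[Theorem~7.4]{LeinsterShulman} as in the earlier proposition should handle this cleanly.

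Finally I would check the algebraic structure. For the ring isomorphism in~\eqref{isomorphism-zero}, I would compute the product of two classes $[u_f], [u_g] \in \MH^0_0(X) = \Z^X$ represented by functions $f,g$ on $0$-simplices: by the product formula, $(f\cutprod g)(x_0) = f(x_0)g(x_0)$, which is exactly pointwise multiplication, with the unit $u$ the constant function $1$. For the bimodule structure on $\MH^1_\ell(X)$, I would evaluate the products $f\cutprod \varphi_{xy}$ and $\varphi_{xy}\cutprod g$ for $f,g \in \MC^0_0(X)$ using the definition of $\cutprod$ on the appropriate simplices. Concretely, $(f\cutprod\varphi_{xy})(x',y') = f(x')\varphi_{xy}(x',y')$ and $(\varphi_{xy}\cutprod g)(x',y') = \varphi_{xy}(x',y')g(y')$, since $\varphi_{xy}$ is supported on the single simplex $(x,y)$; combining these gives $(f\cdot m\cdot g)(x,y) = f(x)m(x,y)g(y)$ after passing to cohomology classes, which is precisely the claimed $\Z^X$-bimodule rule. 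Naturality of both isomorphisms in $X$ would follow by unwinding the definition of $f^\ast$ on these explicit generators, noting that a non-increasing map sends adjacent pairs to adjacent pairs (or collapses them) compatibly.
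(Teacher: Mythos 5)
Your proposal is correct and follows essentially the same route as the paper: the paper obtains the additive isomorphisms by dualising the proofs of Theorems~7.2 and~7.4 of Leinster--Shulman (which is exactly the low-degree cochain computation you carry out directly, using that $\MC^0_\ell(X)=0$ for $\ell>0$ and that degree-one cocycles are the functions supported on adjacent pairs), and then verifies the ring and bimodule structures by the same cochain-level product computations with the explicit representatives $\varphi_f$ and $\varphi_{xy}$. The only difference is cosmetic: you spell out the dualisation rather than citing it.
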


\begin{proof}
The isomorphisms~\eqref{isomorphism-zero} and~\eqref{isomorphism-one}
are obtained by dualising the proofs of Theorems~4.1 and~4.3
of~\cite{LeinsterShulman}.
(Those results were only stated in the metric case,
but extend to the extended quasi-metric case without change.)
In addition this shows that $f\in\Z^X$ corresponds to the 
cohomology class of the element $\varphi_f\in\MC^0_0(X)$
defined by $\varphi_f(x_0) = f(x_0)$ for each $0$-simplex $(x_0)$,
and that $m\in\Z^{\adj(X,\ell)}$ corresponds to the cohomology
class of the element $\psi_m\in\MC^1_\ell(X)$
defined by
\[
	\psi_m(x,y)=
	\left\{\begin{array}{ll}
		m(x,y) & \text{if }(x,y)\text{ adjacent,}
		\\
		0 & \text{otherwise.}	
	\end{array}\right.
\]
We compute
\[
	(\varphi_f\cutprod\varphi_g)(x)
	=\varphi_f(x)\cdot\varphi_g(x)
	=f(x)g(x)
\]
so that $\varphi_f\cutprod\varphi_g=\varphi_{fg}$.
And we compute
\[
	(\varphi_f\cutprod\psi_m\cutprod\varphi_g)(x,y)
	=
	\varphi_f(x)
	\cdot
	\psi_m(x,y)
	\cdot
	\varphi_g(y)
	=
	f(x)m(x,y)g(x)
\]
so that $\varphi_f\cutprod\psi_m\cutprod\varphi_g = \psi_{f\cdot m\cdot g}$.
The induced relations on cohomology classes prove that our
isomorphisms respect the multiplicative structures
as described.
\end{proof}

\begin{proposition}
\label{proposition-recovery}
The magnitude cohomology ring $\MH_\ast^\ast(X)$ of an
extended quasi-metric space
$X$ determines the underlying set $X$ up to bijection, together with 
the adjacent pairs in $X$ and the distances between them.
\end{proposition}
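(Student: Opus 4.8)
The plan is to extract all of the desired data---the underlying set, the adjacent pairs, and their distances---from cohomological degrees $0$ and $1$ alone, using the ring- and bimodule-level description furnished by Proposition~\ref{proposition-zeroone}. The guiding principle is that every piece of data should be recovered by a construction invariant under isomorphism of bigraded rings, so that isomorphic cohomology rings automatically yield identical recovered data. Set $R=\MH^0_0(X)$; by Proposition~\ref{proposition-zeroone} there is a ring isomorphism $R\cong\Z^X$, where $\Z^X$ carries pointwise multiplication, and for each $\ell>0$ a bimodule isomorphism $\MH^1_\ell(X)\cong\Z^{\adj(X,\ell)}$ with action $(f\cdot m\cdot g)(x,y)=f(x)m(x,y)g(y)$. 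Here the left and right $R$-actions on $\MH^1_\ell(X)$ are simply the ring multiplications $\MH^0_0\otimes\MH^1_\ell\to\MH^1_\ell$ and $\MH^1_\ell\otimes\MH^0_0\to\MH^1_\ell$, which are part of the ring structure since the two gradings add under the product.

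First I would recover the set $X$ as the set of primitive idempotents of $R$. Working in $\Z^X$, any idempotent $e$ satisfies $e(x)^2=e(x)$ for every $x$, so $e(x)\in\{0,1\}$ and $e=\one_S$ is the indicator function of $S=\{x:e(x)=1\}$. Since $\one_S\one_T=\one_{S\cap T}$, a nonzero idempotent $\one_S$ is primitive---that is, admits no decomposition into two nonzero orthogonal idempotents---exactly when $S$ is a singleton. Thus $x\mapsto e_x:=\one_{\{x\}}$ is a bijection from $X$ onto the primitive idempotents of $R$, and because ``primitive idempotent'' is a purely ring-theoretic notion this recovers $X$ up to bijection.

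Next I would recover the adjacencies and distances by testing $\MH^1_\ell(X)$ against the idempotents $e_x$. For primitive idempotents $e_x,e_y$ and any $m\in\MH^1_\ell(X)\cong\Z^{\adj(X,\ell)}$, the element $e_x\cdot m\cdot e_y$ corresponds to the function on $\adj(X,\ell)$ whose value at $(x',y')$ is $e_x(x')\,m(x',y')\,e_y(y')$; this vanishes unless $(x',y')=(x,y)$, where it equals $m(x,y)$. Hence $e_x\cdot\MH^1_\ell(X)\cdot e_y$ is nonzero precisely when $(x,y)\in\adj(X,\ell)$, i.e.\ when $x,y$ are adjacent and $d(x,y)=\ell$. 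I therefore declare $(x,y)$ adjacent iff $e_x\cdot\MH^1_\ell(X)\cdot e_y\neq 0$ for some $\ell>0$, and recover $d(x,y)$ as the unique such $\ell$ (unique since $(x,y)$ lies in $\adj(X,\ell)$ only for $\ell=d(x,y)$). As the bigrading and the products used are all part of the ring structure, this step is again ring-invariant.

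The part requiring the most care is the characterization of primitive idempotents when $X$ is infinite: one must confirm that every idempotent of $\Z^X$ is an indicator function (which holds because $\Z$ has no idempotents other than $0$ and $1$) and that the nonzero primitive ones are exactly the singletons, with no exotic primitive idempotents arising in the infinite case. Beyond this, the only real check is that the reconstruction uses solely ring-invariant data---the idempotents of $\MH^0_0$ together with the two-sided multiplication on the graded pieces $\MH^1_\ell$---so that an abstract isomorphism $\MH^\ast_\ast(X)\cong\MH^\ast_\ast(X')$ of bigraded rings transports the recovered set, adjacencies, and distances of $X$ onto those of $X'$. The residual bimodule computation is a direct specialisation of the multiplicative formulas already established in Proposition~\ref{proposition-zeroone}.
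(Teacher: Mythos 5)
Your proposal is correct and follows essentially the same route as the paper: identify $X$ with the primitive idempotents of $\MH^0_0(X)\cong\Z^X$, then detect adjacency and distance by testing whether $e_x\cdot\MH^1_\ell(X)\cdot e_y$ is nonzero, exactly as in the paper's argument with the $\delta_x$. The only difference is that you spell out the (correct) verification that the primitive idempotents of $\Z^X$ are precisely the singleton indicators, including in the infinite case, which the paper simply asserts.
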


\begin{proof}
Given $x\in X$, let $\delta_x\in\Z^X$ denote the function
with value $1$ on $x$ and $0$ on all other elements of $X$.
Then the primitive idempotents of $\Z^X$ are precisely the 
elements $\delta_x$.  Given $x,y\in X$ and $m\in\Z^{\adj(X,\ell)}$,
we have
\[
	(\delta_x \cdot m\cdot \delta_y)(a,b)
	=\left\{\begin{array}{ll}
		m(x,y)& \text{if }(a,b)=(x,y),
		\\
		0 & \text{if }(a,b)\neq(x,y).
	\end{array}\right.
\]
Note that the first possibility only occurs if $(x,y)\in\adj(X,\ell)$.
Thus $\delta_x\cdot\Z^{\adj(X,\ell)}\cdot \delta_y$ is nonzero if and only if 
$(x,y)\in\adj(X,\ell)$.

Thus, using the isomorphisms of Proposition~\ref{proposition-zeroone},
we see that the magnitude cohomology ring of $X$ determines
$X$ up to bijection as the set of primitive idempotents of $\MH^0_0(X)$.
And given primitive idempotents $e,f\in\MH^0_0(X)$, we have
$e\cdot\MH^1_\ell(X)\cdot f\neq 0$ 
if and only if $e,f$ correspond to elements that
are adjacent and a distance $\ell$ apart.
\end{proof}

\begin{lemma}
\label{lemma-recovery}
Let $X$ be an extended quasi-metric space
for which $\inf\{d(a,b)\mid a,b\in X,\ a\neq b\}$
is positive.
Then for any distinct $a,b\in X$, $d(a,b)$
is the minimum of the set
\[
	\{d(x_0,x_1)+\cdots+d(x_{k-1},x_k)\mid
	a=x_0,\ b=x_k,\ x_{i-1},x_i\text{ adjacent for }i=1,\ldots,k\}
\]
if the set is nonempty, and $d(a,b)=\infty$ otherwise.
\end{lemma}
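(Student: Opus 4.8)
The plan is to prove the two inequalities separately: that $d(a,b)$ is a lower bound for the displayed set, so that its minimum (when it exists) is at least $d(a,b)$, and that $d(a,b)$ is actually attained by some adjacent chain, so that the minimum is at most, and hence equal to, $d(a,b)$. The positivity hypothesis $\delta:=\inf\{d(a,b)\mid a\neq b\}>0$ will be used precisely to guarantee that a certain refinement process terminates.

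For the lower bound I would invoke the triangle inequality, which holds in any generalised metric space because these are categories enriched in $[0,\infty]$: for any chain $a=x_0,\ldots,x_k=b$ one has $d(a,b)\leqslant\sum_{i=1}^k d(x_{i-1},x_i)$. Hence every element of the displayed set is $\geqslant d(a,b)$. This simultaneously settles the case $d(a,b)=\infty$: an adjacent chain has every $d(x_{i-1},x_i)$ finite, since finiteness is built into the definition of adjacency, and then the triangle inequality forces $d(a,b)$ to be finite; so when $d(a,b)=\infty$ the set is empty, as claimed.

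For attainment, assume $d(a,b)<\infty$ and call a chain $a=x_0,\ldots,x_k=b$ with distinct consecutive entries a \emph{geodesic} if $\sum_i d(x_{i-1},x_i)=d(a,b)$. Geodesics exist, since the two-term chain $(a,b)$ is one (using $a\neq b$), and along any geodesic each edge length is $\geqslant\delta$ because consecutive entries are distinct; as the edge lengths sum to $d(a,b)$, the number $k$ of edges is at most $d(a,b)/\delta<\infty$. I may therefore choose a geodesic with the maximum possible number of edges. The crux is to show that in such a maximal geodesic every consecutive pair $(x_{i-1},x_i)$ is adjacent. Here $d(x_{i-1},x_i)\in(0,\infty)$, being positive as the entries are distinct and finite as it is bounded by the total $d(a,b)$; so failure of adjacency would supply a point $c$ with $x_{i-1}\neq c\neq x_i$ and $d(x_{i-1},x_i)=d(x_{i-1},c)+d(c,x_i)$. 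Inserting $c$ between $x_{i-1}$ and $x_i$ keeps all consecutive entries distinct and leaves the total length unchanged, producing a geodesic with one more edge and contradicting maximality. Thus the maximal geodesic is an adjacent chain of length exactly $d(a,b)$, so $d(a,b)$ lies in the displayed set and is its minimum.

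The main obstacle is the termination of this insertion argument, and this is exactly where positivity enters: it bounds the number of edges in any geodesic by $d(a,b)/\delta$, guaranteeing that a geodesic with a maximal number of edges exists. Without positivity the refinement can continue indefinitely — in a convex subset of Euclidean space there are no adjacent pairs whatsoever — which is consistent with Example~\ref{example-compact} and shows the hypothesis cannot be dropped.
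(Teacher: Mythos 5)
Your proof is correct and follows essentially the same route as the paper's: the triangle inequality gives the lower bound and settles the infinite case, and the attainment is obtained by inserting intermediate points into a length-realising chain, with the hypothesis $\inf\{d(a,b)\mid a\neq b\}>0$ bounding the number of edges and hence guaranteeing termination (you phrase this as choosing a geodesic with maximally many edges, the paper as iterating insertions until none is possible, which is the same argument). Your explicit check that each non-adjacent consecutive pair has distance in $(0,\infty)$, so that failure of adjacency really does produce an intermediate point, is a welcome detail that the paper leaves implicit.
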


\begin{proof}
Let $a,b$ be distinct elements of $X$,
and let us write $A_{a,b}$ for the set given in the statement.
If $d(a,b)=\infty$ then $A_{a,b}$ is empty and the result follows.
If $d(a,b)$ is finite, then any element of $A_{a,b}$ is greater than
or equal to $d(a,b)$ by the triangle inequality.  So it remains
to show that $A_{a,b}$ is nonempty and contains $d(a,b)$.
To do this, we iteratively construct sequences $x_0,x_1,\ldots,x_k$
with the following properties.
\begin{itemize}
	\item
	$a=x_0$, $b=x_k$.
	\item
	Consecutive entries are distinct.
	\item
	$d(x_0,x_1)+\cdots+d(x_{k-1},x_k)=d(a,b)$.
\end{itemize}
We do this by starting with the sequence $x_0=a$, $x_1=b$.
Given such a sequence, if its consecutive entries are not all adjacent, 
then there is some pair $x_{i-1},x_i$ which is not adjacent.  
We may then insert a new entry between $x_{i-1}$ and $x_i$ 
to obtain a longer sequence with the same properties.
The length $k$ of any such sequence is bounded above by 
$d(a,b)/\inf\{d(x,y)\mid x,y\in X,\ x\neq y\}$, and so this process must end with a sequence 
in which consecutive pairs are adjacent.
This sequence demonstrates that $A_{x,y}$ is nonempty and contains $d(a,b)$,
and this completes the proof.
\end{proof} 

\begin{proof}[Proof of Theorem~\ref{theorem-recovery}]
Lemma~\ref{lemma-recovery} shows that $X$ is determined by 
the data of the underlying set, the adjacent pairs,
and the distances between them.  And Proposition~\ref{proposition-recovery}
shows that this data is determined by the magnitude
cohomology ring $\MH^\ast_\ast(X)$.
\end{proof}

\begin{remark}\label{remark-recovery}
We can now specify how to recover $X$ from $\MH^\ast_\ast(X)$
as in Theorem~\ref{theorem-recovery}.
Let $\bar X$ denote the set of primitive idempotents of $\MH^\ast_\ast(X)$.  For $e\in\bar X$
we declare $\bar d(e,e)=0$.  Next, for $e,f\in\bar X$, compute $e\cdot\MH^1_\ell(X)\cdot f$ for each $\ell$.  There is at most one value of $\ell$ for which the resulting group is nonzero.  If there is indeed such an $\ell$, then we declare $e$ and $f$ to be adjacent and set $\bar d(e,f)=\ell$.  Next, for each pair $e$, $f$ that is not adjacent, we define $\bar d(e,f)$ to be the minimum of the set
\[
        \{\bar d(x_0,x_1)+\cdots+\bar d(x_{k-1},x_k)\mid
        a=x_0,\ b=x_k,\ x_{i-1},x_i\text{ adjacent for }i=1,\ldots,k\}
\]
if the set is nonempty, and we set $\bar d(a,b)=\infty$ otherwise.  Then $\bar d$ is an extended quasi-metric on $\bar X$, and $(X,d)$ is isometric to $(\bar X,\bar d)$.
\end{remark}

\section{Magnitude homology of enriched categories}
\label{section-enriched-homology}

In this brief section
we recall Leinster and Shulman's definition of magnitude homology 
of enriched categories~\cite{LeinsterShulman}, 
and we spell out the details in the case of posets,
categories, and generalised metric spaces.
This is intended to motivate and facilitate the introduction of magnitude
cohomology in the following section,
but we also hope that it will give readers who are not
familiar with~\cite{LeinsterShulman}
a quick way into the subject.
Of course, we heartily recommend the original treatment,
namely section~5 of~\cite{LeinsterShulman}.

Let $\V$ be a symmetric monoidal category,
let $\A$ be a closed symmetric monoidal abelian category,
and let $\Sigma\colon\V\to\A$ be a strong symmetric monoidal functor.
We assume that $\V$ is \emph{semicartesian}, meaning that 
the unit object $1_\V$ is terminal.

\begin{definition}[Magnitude homology]
Given a $\V$-category $X$, the \emph{magnitude nerve} of $X$
is the simplicial object $B^\Sigma_\bullet(X)$ in $\A$ defined by
\[
	B^\Sigma_k(X)
	=\bigoplus_{x_0,\ldots,x_k}
	\Sigma X(x_0,x_1)\otimes\cdots\otimes\Sigma X(x_{k-1},x_k)
\]
where the sum is over all tuples $x_0,\ldots,x_k$ of objects of $X$.
The inner face maps $d_1,\ldots,d_{k-1}$ are defined using monoidality 
of $\Sigma$ and composition in $X$, and have the effect of replacing two
adjacent factors $\Sigma X(x_{i-1},x_i)\otimes\Sigma X(x_i,x_{i+1})$
with a single factor $\Sigma X(x_{i-1},x_{i+1})$.
The outer face maps $d_0$ and $d_k$ are defined using terminality
of $1_\V$ and monoidality of $\Sigma$, and have the effect of
erasing the first and last factors $\Sigma X(x_0,x_1)$
and $\Sigma X(x_{k-1},x_k)$ respectively.
The degeneracy maps $s_i$ are defined using the identity maps
of $X$ and monoidality of $\Sigma$, and have the effect of inserting
a factor $\Sigma X(x_i,x_i)$ between
$\Sigma X(x_{i-1},x_i)$ and $\Sigma X(x_i,x_{i+1})$.
We leave it to the reader to write out the simplicial structure maps
in detail for themselves, or to unpack them
from Remark~5.11 of~\cite{LeinsterShulman} if they wish.

The \emph{magnitude chain complex} $C^\Sigma_\ast(X)$ of $X$
is defined to be the chain complex $C_\ast(B^\Sigma_\bullet(X))$
of $B^\Sigma_\bullet(X)$.
Thus
\[
	C^\Sigma_k(X)
	=
	\bigoplus_{x_0,\ldots,x_k}
	\Sigma X(x_0,x_1)\otimes\cdots\otimes\Sigma X(x_{k-1},x_k)
\]
and
\[
	\partial\colon C_k^\Sigma(X)
	\longrightarrow C_{k-1}^\Sigma(X)
\]
is defined by $\partial = d_0-d_1+\cdots+(-1)^kd_k$.
The \emph{normalized magnitude chain complex} $N_\ast^k(X)$ of $X$
is the normalised chain complex $N_\ast(B_\bullet^\Sigma(X))$
of $B_\bullet^\Sigma(X)$.
This is the quotient of $C_\ast(B_\bullet^\Sigma(X))$
by the subcomplex generated by the images of the degeneracy maps.
We refer the reader to sections~8.2 and 8.3 of~\cite{Weibel}.
The \emph{magnitude homology} $H^\Sigma_\ast(X)$ of $X$ is the homology
of the magnitude chains of $X$, or equivalently the homology
of the normalised magnitude chains of $X$:
\[
	H^\Sigma_k(X)
	=
	H_k(C^\Sigma_\ast(X))
	\cong
	H_k(N_\ast^\Sigma(X)).
\]
\end{definition}

\begin{remark}\label{remark-assumption}
	In \cite{LeinsterShulman} the complex
	$C^\Sigma_\ast(X)$ is denoted by 
	$\widetilde{\mathrm{MC}}_\ast^\Sigma(X)$,
	while $N^\Sigma_\ast(X)$ is denoted by $\mathrm{MC}_\ast^\Sigma(X)$.
\end{remark}

\begin{example}[Posets]
\label{example-posets}
	Let $\V$ be the category $\two$, 
	with objects $\true$ and $\false$ for `true'
	and `false' respectively, with a single morphism
	$\false\to\true$ besides the identities, and
	with $\otimes$ given by conjunction, i.e.~logical `and'. Thus
	$1_\two=\true$.
	We define $\Sigma\colon\two\to\Ab$ by 
	$\Sigma(\true)=\Z$, $\Sigma(\false)=0$, 
	with monoidal structure in which
	the maps 
	$1_\Ab\to\Sigma(1_\two)$ and $\Sigma(\true)\otimes\Sigma(\true)
	\to\Sigma(\true)$ are the identity and multiplication
	maps $\Z\to\Z$ and $\Z\otimes\Z\to\Z$ respectively.
	A skeletal category $X$ enriched in $\two$ is nothing other
	than a poset: the objects of $X$ are the elements,
	and $x\leqslant y$ if and only if $X(x,y)=\true$.
	Let $X$ be a poset, regarded as a skeletal $\two$-category.
	We will describe $B_\bullet^\Sigma(X)$.
	Observe that there is an isomorphism	
	\[
		\Sigma X(x_0,x_1)\otimes\cdots\otimes\Sigma X(x_{k-1},x_k)
		\cong
		\left\{\begin{array}{ll}
			\Z & \text{if }x_0\leqslant\cdots\leqslant x_k
			\\
			0 & \text{otherwise}
		\end{array}\right.
	\]
	given by the product $\Z^{\otimes k}\to\Z$.
	Thus $B^\Sigma_k(X)\cong\Z\{x_0\leqslant\cdots\leqslant x_k\}$,
	and one can check that the face and degeneracy
	maps are given by erasing elements and inserting equalities,
	respectively.
	Thus $N^\Sigma_k(X)\cong\Z\{x_0<\cdots<x_k\}$, with boundary
	map given by the usual alternating sum of faces.
	In other words $N^\Sigma_\ast(X)$ is precisely the simplicial chain
	complex of the order complex $|X|$ of $X$, and
	\[
		H^\Sigma_\ast(X) = H_\ast(|X|).
	\]
\end{example}

\begin{example}[Categories]
\label{example-categories}
	Let $\V=\mathrm{Set}$ be the category of sets.
	A category enriched in $\mathrm{Set}$ is nothing other
	than a category.
	Let $\A=\Ab$, and let $\Sigma\colon\mathrm{Set}\to\Ab$
	be the free abelian group functor with its evident monoidal structure.
	Thus if $X$ is a category and $x,y$ are objects, then
	$\Sigma X(x,y)$ is the free abelian group on the morphisms
	$f\colon x\to y$.
	Consequently, there is an isomorphism
	\begin{multline*}
		B_k^\Sigma(X)=
		\bigoplus_{x_0,\ldots,x_k}
		\Sigma X(x_0,x_1)\otimes\cdots\otimes\Sigma X(x_{k-1},x_k)
		\\
		\xrightarrow{\ \ \cong\ \ }
		\Z\left\{ x_0\xrightarrow{f_1}x_1\xrightarrow{f_2}\cdots
		\xrightarrow{f_k} x_k
		\right\}
		=
		\Z N_k(X)
	\end{multline*}
	where $N_\bullet(X)$ denotes the simplicial nerve of $X$.
	Unwinding the definition of the face and degeneracy maps
	shows that $B^\Sigma_\bullet(X)$ is precisely
	the free abelian group on $N_\bullet X$.
	Thus $C^\Sigma_\ast(X)$ is the simplicial
	chains on $N_\bullet(X)$, or equivalently,
	the simplicial chains on the classifying space
	$BX$, and so we have
	\[
		H^\Sigma_\ast(X)
		=
		H_\ast(BX).
	\]
\end{example}

\begin{example}
\label{example-metric}
	Now let us take $\V=[0,\infty]$, 
	so that a category enriched in $\V$ is a generalised
	metric space.  
	(See the introduction to section~\ref{section-magcoh-metric}.)
	Let us take $\A=\prod_\R\Ab$, the category of $\R$-graded
	abelian groups, equipped with the symmetric monoidal structure 
	given by $(A\otimes B)_\ell = \bigoplus_{j+k=\ell}A_j\otimes B_k$.
	Now we define $\Sigma\colon[0,\infty]\to\A$
	to be the functor which sends $\ell\in[0,\infty]$ to 
	a copy of $\Z$
	concentrated in degree $\ell$, and which necessarily sends
	all non-identity morphisms in $[0,\infty]$ to the zero map.
	We equip $\Sigma$ with the symmetric monoidal structure
	under which $\Sigma(j)\otimes\Sigma(k)\to\Sigma(j+k)$
	is given in degree $j+k$ by the multiplication map $\Z\otimes\Z\to\Z$,
	and under which $1_\A\to\Sigma(0)$ is given in degree $0$
	by the identity map $\Z\to\Z$.

	Now if $X$ is a generalised metric space,
	then $\Sigma X(x_0,x_1)\otimes\cdots\otimes\Sigma X(x_{k-1},x_k)$
	is the tensor product of $k$ copies of $\Z$, concentrated
	in degrees $d(x_0,x_1),\ldots,d(x_{k-1},x_k)$ respectively.
	This is canonically isomorphic, under the multiplication map,
	to a single copy of $\Z$ concentrated in degree $\ell(x_0,\ldots,x_k)$.
	If we write the generator of this copy of $\Z$ as
	$(x_0,\ldots,x_k)$, then we find that
	\[
		B^\Sigma_k(X) = \Z\{(x_0,\ldots,x_k)\mid
		x_0,\ldots,x_k\in X\},
	\]
	where the right-hand-side is interpreted as an $\R$-graded abelian
	group in the evident way.
	The reader may now be able to verify that the face map
	$d_i$ is given by
	\[
		d_i(x_0,\ldots,x_k)
		=
		\left\{\begin{array}{ll}
			(x_0,\ldots\widehat{x_i},\ldots,x_k)
			&
			\text{if }\ell(x_0,\ldots\widehat{x_i},\ldots,x_k)
			=  \ell(x_0,\ldots,x_k),
			\\
			0
			&
			\text{otherwise}
		\end{array}\right.
	\]
	and that the degeneracy map $s_i$ is given by
	\[
		s_i(x_0,\ldots,x_k)=(x_0,\ldots,x_i,x_i,\ldots,x_k).
	\]
	Thus the image of $s_i$ consists of tuples whose $i$-th
	entry is repeated, and so the span of the images of these degeneracy
	maps is exactly the span of the tuples which have at least one repeated
	consecutive entry.  Dividing out by this span,
	we see that the normalised magnitude chains 
	$N^\Sigma_\ast(X)$ are precisely the magnitude chains of $X$
	as defined in section~\ref{section-magcoh-metric},
	\[
		N^\Sigma_\ast(X) = \MC_{\ast,\ast}(X)
	\]
	and consequently
	\[
		H^\Sigma_\ast(X) = \MH_{\ast,\ast}(X).
	\]
\end{example}

\section{Magnitude cohomology of enriched categories}
\label{section-enriched-cohomology}

In this section we will define the magnitude cohomology ring
of an enriched category, and we will give examples showing
that this recovers the cohomology ring of the order complex
of a poset, the cohomology ring of the classifying space of a category,
and the magnitude cohomology ring of a metric space. 
Finally we prove that when the enriching category is cartesian,
the magnitude cohomology ring is graded-commutative.
This explains the commutativity of the first two examples
above, since the underlying categories $\V=\two$ (for posets)
and $\V=\set$ (for categories) are both cartesian,
while $\V=[0,\infty]$ (for generalised metric spaces) is not.

Throughout this section we fix a semicartesian symmetric monoidal category $\V$,
a closed symmetric monoidal abelian category $\A$,
and a strong symmetric monoidal functor $\Sigma\colon\V\to\A$.
We write $1_\A$ for the unit object of $\A$, and we write
$[-,-]$ for hom-objects in $\A$.

\begin{definition}[Magnitude cohomology of enriched categories]
Let $X$ be a $\V$-category.
The \emph{magnitude cochain complex} $C_\Sigma^\ast(X)$ of $X$
is the cochain complex in $\A$ obtained by setting
\[
	C_\Sigma^k(X) = [C_k^\Sigma(X),1_\A],
\]	
with the induced differential $\partial^\ast=[\partial,1_\A]$.
The \emph{normalized magnitude cochain complex} 
$N_\Sigma^\ast(X)$ of $X$ is defined by
\[
	N_\Sigma^k(X) = [N_k^\Sigma(X),1_\A]
\]
with $\partial^\ast=[\partial,1_\A]$.  
The \emph{magnitude cohomology} $H_\Sigma^\ast(X)$ is defined
to be the cohomology of the magnitude cochains, or equivalently
of the normalized magnitude cochains:
\[
	H_\Sigma^\ast(X) = H^\ast(C_\Sigma^\ast(X))\cong
	H^\ast(N_\Sigma^\ast(X)).
\]
\end{definition}

\begin{definition}[The coproduct and counit]
\label{definition-coproduct}
Let $X$ be a $\V$-category.
We define the \emph{coproduct}
\[
	\Delta\colon C^\Sigma_\ast(X)
	\longrightarrow
	C^\Sigma_\ast(X)\otimes C^\Sigma_\ast(X)
\]	
to be the sum of the maps 
\[
	\Delta\colon C^\Sigma_{p+q}(X)
	\to
	C^\Sigma_p(X)\otimes C^\Sigma_q(X)
\]
that send the $x_0,\ldots,x_{p+q}$ summand into the product
of the $x_0,\ldots,x_p$ and $x_p,\ldots,x_{p+q}$ summands by the evident
map from 
\[
	\Sigma X(x_0,x_1)\otimes\cdots\otimes\Sigma X(x_{p+q-1},x_{p+q})
\]
to
\[
	\left(
		\Sigma X(x_0,x_1)\otimes\cdots\otimes\Sigma X(x_{p-1},x_{p})
	\right)
	\otimes
	\left(
		\Sigma X(x_p,x_{p+1})\otimes
		\cdots\otimes\Sigma X(x_{p+q-1},x_{p+q})
	\right).
\]
It is a chain map.  We further define the \emph{counit} map
\[
	\varepsilon\colon C^\Sigma_\ast(X)
	\longrightarrow	
	1_{\Ch_\A}	
\]
to be the map that is given by $0$ in positive degrees
and in degree $0$ by the map $C_0^\Sigma(X)\to 1_{\Ch\A}$,
$\bigoplus_{x_0}1_\A\to 1_\A$
that is the identity on each summand.
Again, this is a chain map. 
Furthermore, the coproduct and counit both reduce to maps
on the normalized magnitude chains,
that we also call the \emph{coproduct} and \emph{counit}
\[
	\Delta\colon N^\Sigma_\ast(X)
	\longrightarrow
	N^\Sigma_\ast(X)\otimes N^\Sigma_\ast(X),
	\qquad
	\varepsilon\colon N^\Sigma_\ast(X)\to 1_{\Ch_\A}.
\]	
These maps make $C_\ast^\Sigma(X)$ and $N_\ast^\Sigma(X)$
into coassociative, counital differential graded coalgebras
in $\A$.
\end{definition}

\begin{definition}[The magnitude cohomology ring]
The maps $\Delta$ and $\varepsilon$ induce dual maps
\[
	\mu\colon C^\ast_\Sigma(X)\otimes C^\ast_\Sigma(X)
	\longrightarrow C^\ast_\Sigma(X),
	\qquad
	\eta\colon 1_{\Ch_\A} \longrightarrow C^\ast_\Sigma(X)
\]
defined by $\mu=[\Delta,1_\A]$ and $\eta=[\varepsilon,1_\A]$
making $C^\ast_\Sigma(X)$ into an associative, unital dg-algebra
in the abelian category $\A$.
They induce maps of the same name in homology 
\[
	\mu\colon H^\ast_\Sigma(X)\otimes H^\ast_\Sigma(X)
	\longrightarrow H^\ast_\Sigma(X),
	\qquad
	\eta\colon 1_{\A} \longrightarrow H^\ast_\Sigma(X)
\]
and these make $H^\ast_\Sigma(X)$ into an associative unital graded algebra
in $\A$.
The same definitions can be carried out using the normalized chains,
and produce the same structure on $H^\ast_\Sigma(X)$.
\end{definition}

\begin{example}[Magnitude cohomology rings of posets]
Following on from Example~\ref{example-posets}, and using the isomorphism
$N^\Sigma_k(X)\cong\Z\{x_0< \cdots < x_k\}$
established there, one finds that $N^\ast_\Sigma(X)$ is the $\Z$-dual
to $N_\ast^\Sigma(X)$, i.e.~the usual simplicial cochains on the order
complex $|X|$, together with the product defined by the
formula 
$(\xi\cdot\eta)(x_0<\cdots<x_k)=\xi(x_0<\cdots<x_i)\eta(x_i<\cdots<x_k)$
for $\xi\in N^i_\Sigma(X)$ and $\eta\in N^{k-i}_\Sigma(X)$.
This is again the standard definition of the cochain-level cup-product
on $|X|$.  Thus we have an isomorphism of graded associative algebras
\[
	H^\ast_\Sigma(X)\cong H^\ast(|X|).
\]
Note that the algebra on the right-hand-side is graded commutative.
\end{example}

\begin{example}[Magnitude cohomology rings of categories]
Following on from Example~\ref{example-categories}, and using the 
identification of $B_\bullet^\Sigma(X)$ with the free abelian
group on the simplicial nerve,
$\Z N_\bullet (X)$, we find that $C^\ast_\Sigma(X)$ is the simplicial
cochain complex of $N_\bullet(X)$, or equivalently the simplicial
cochain complex of the classifying space $BX$.
Moreover, unwinding the definition of the product shows that it
again coincides with the usual definition of the cochain-level
cup product, so that
\[
	H^\ast_\Sigma(X)\cong H^\ast(BX)
\]
as associative graded unital rings.
Note again that the right hand side is graded commutative.
\end{example}

\begin{example}[Magnitude cohomology rings of metric spaces]
Following on from Example~\ref{example-metric},
and using the identification $N_\ast^\Sigma(X)\cong\MC_{\ast,\ast}(X)$
obtained there, one immediately obtains $N^\ast_\Sigma(X)\cong
\MC^\ast_\ast(X)$. 
Recall that the isomorphism $N_k^\Sigma(X)\cong\MC_{k,\ast}(X)$
identifies the generator $1\otimes\cdots\otimes 1\in
\Sigma X(x_0,x_1)\otimes\cdots\otimes \Sigma X(x_{k-1},x_k)$
with the simplex $(x_0,\ldots,x_k)$.
Thus the map $\Delta$ of Definition~\ref{definition-coproduct}, 
after translating it to a coproduct
on $N^\Sigma_\ast(X)$, is the map that sends $(x_0,\ldots,x_k)$
to $\sum_{i=0}^k (x_0,\ldots,x_i)\otimes(x_i,\ldots,x_k)$.
It now follows that the induced product on
$\MC_\ast^\ast(X)$ is precisely the one defined in
Definition~\ref{definition-magcoh}.
Thus
\[
	H^\ast_\Sigma(X)\cong\MH^\ast_\ast(X)
\]
is an isomorphism of rings.
\end{example}

Observe that in the first two examples above, the magnitude
cohomology rings were graded-commutative, but that in the third they
were not.  They key difference here is that in the first two cases
the enriching categories $\V=\two$ and $\V=\set$ are cartesian,
while $\V=[0,\infty]$ is not.  
The remainder of this section is given to the proof of the following
theorem.  Along the way, we will see an explicit connection
between our product and the Alexander-Whitney map
in the cartesian case.

\begin{theorem}
\label{theorem-commutativity}
Suppose that the enriching category $\V$ is cartesian,
and let $X$ be a $\V$-category.  Then $H^\ast_\Sigma(X)$
is \emph{graded commutative}.
\end{theorem}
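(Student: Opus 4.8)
The plan is to realise the coproduct $\Delta$ as the Alexander--Whitney comultiplication associated to the diagonal, and then to deduce graded-commutativity of the product from the classical fact that the Alexander--Whitney diagonal is cocommutative up to natural chain homotopy. Concretely, graded-commutativity of $\mu$ on $H^\ast_\Sigma(X)$ will follow once we produce a chain homotopy between $\Delta$ and $T\circ\Delta$, where $T\colon C^\Sigma_\ast(X)\otimes C^\Sigma_\ast(X)\to C^\Sigma_\ast(X)\otimes C^\Sigma_\ast(X)$ is the Koszul swap $a\otimes b\mapsto(-1)^{|a||b|}\,b\otimes a$ afforded by the symmetry of $\otimes$ in $\A$. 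Indeed, dualising a homotopy $\Delta\simeq T\circ\Delta$ and using the natural map $[C^\Sigma_\ast(X),1_\A]\otimes[C^\Sigma_\ast(X),1_\A]\to[C^\Sigma_\ast(X)\otimes C^\Sigma_\ast(X),1_\A]$ turns cocommutativity of $\Delta$ into the identity $\mu=\mu\circ T$ on cohomology, which is exactly graded-commutativity.

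First I would set up the cartesian geometry. Since $\V$ is cartesian we may form the product $\V$-category $X\times X$, with $(X\times X)\bigl((x,y),(x',y')\bigr)=X(x,x')\times X(y,y')$, together with the diagonal $\V$-functor $\delta\colon X\to X\times X$ and the swap $\V$-functor $\tau\colon X\times X\to X\times X$, which satisfy $\tau\circ\delta=\delta$. Strong monoidality of $\Sigma$ converts these products of hom-objects into tensor products in $\A$, and after reindexing tensor factors by the symmetry of $\A$ one identifies the magnitude nerve of the product with the diagonal of the external tensor square of the magnitude nerve of $X$, namely $B^\Sigma_\bullet(X\times X)\cong\mathrm{diag}\bigl(B^\Sigma_\bullet(X)\boxtimes B^\Sigma_\bullet(X)\bigr)$. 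The Eilenberg--Zilber theorem for (normalised) chains of simplicial objects in $\A$ then supplies natural Alexander--Whitney and shuffle maps $\AW$ and $\nabla$ between $C^\Sigma_\ast(X\times X)$ and $C^\Sigma_\ast(X)\otimes C^\Sigma_\ast(X)$ with $\AW\circ\nabla=\mathrm{id}$ and $\nabla\circ\AW\simeq\mathrm{id}$, together with commutativity of the shuffle map in the form $\tau_\ast\circ\nabla=\nabla\circ T$. The crucial bookkeeping is to check that under these identifications the coproduct of Definition~\ref{definition-coproduct} is exactly $\Delta=\AW\circ\delta_\ast$; this is the promised explicit link to the Alexander--Whitney map, verified on the generators $(x_0,\ldots,x_k)$, where $\delta_\ast$ produces the diagonal simplex and $\AW$ splits it as $\sum_{i}(x_0,\ldots,x_i)\otimes(x_i,\ldots,x_k)$.

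The symmetry argument is then formal. First one extracts the homotopy $T\circ\AW\simeq\AW\circ\tau_\ast$ by inserting $\nabla\circ\AW\simeq\mathrm{id}$ and applying the two relations $\AW\circ\nabla=\mathrm{id}$ and $\tau_\ast\circ\nabla=\nabla\circ T$. Post-composing with $\delta_\ast$ and using $\tau\circ\delta=\delta$ gives
\[
	T\circ\Delta=T\circ\AW\circ\delta_\ast\simeq\AW\circ\tau_\ast\circ\delta_\ast=\AW\circ(\tau\circ\delta)_\ast=\AW\circ\delta_\ast=\Delta,
\]
so $\Delta$ is cocommutative up to chain homotopy, and dualising as in the first paragraph produces the graded-commutativity of $\mu$.

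I expect the main obstacle to be technical rather than conceptual: it lies in transporting the classical Eilenberg--Zilber package---the existence and naturality of $\AW$ and $\nabla$, their mutual homotopy-inverse relations, and the commutativity of the shuffle map---from simplicial abelian groups to simplicial objects in the abstract symmetric monoidal abelian category $\A$, and in matching the normalised-chains bookkeeping so that $\Delta=\AW\circ\delta_\ast$ holds on the nose. Since every map and homotopy in Eilenberg--Zilber theory is given by a natural formula built from face and degeneracy maps and the symmetry of the tensor product, all of this transfers to $\A$ without essential change; the real work is in confirming the signs and the reindexing isomorphism of the second paragraph, after which the symmetry computation is automatic.
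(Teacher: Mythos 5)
Your proposal is correct and follows essentially the same route as the paper: both factor the coproduct as $\Delta=\AW\circ(\text{diagonal})$, where the diagonal (which the paper builds directly on the magnitude nerve from the maps $\delta_{\Sigma A}$ rather than via the product $\V$-category $X\times X$, a purely cosmetic difference) strictly commutes with the braiding, and both deduce that $\AW$ commutes with the braiding up to homotopy from the Eilenberg--Zilber map $\nabla$ being a strictly symmetric homotopy inverse, transported to general $\A$ because all maps are natural formulas in face and degeneracy operators. Dualising then gives graded-commutativity of $\mu$, exactly as in the paper.
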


We now work towards the proof of this theorem.
Since $\V$ is cartesian, each object $A$ of $\V$ admits
a \emph{diagonal} $\delta_A\colon A\to A\otimes A$,
and this is natural in $A$.
Now, for each object $A$ of $\V$, we obtain
a \emph{diagonal} $\delta_{\Sigma A}\colon\Sigma A \to \Sigma A\otimes\Sigma A$,
defined as the composite
\[
	\Sigma(A)
	\xrightarrow{\Sigma(\delta_A)}
	\Sigma(A\otimes A)
	\cong
	\Sigma A \otimes \Sigma A.
\]
This map $\delta_{\Sigma A}$ is natural and additive.
Moreover, it commutes with the braiding,
in the sense that the diagram
\[\xymatrix{
	{}
	&
	\Sigma A \ar[dl]_{\delta_{\Sigma A}}\ar[dr]^{\delta_{\Sigma A}}
	&
	{}
	\\
	\Sigma A\otimes\Sigma A
	\ar[rr]_\tau
	&
	{}
	&
	\Sigma A\otimes \Sigma A
}\]
commutes, where $\tau$ denotes the braiding of $\A$.

We now construct a \emph{diagonal map}
$\delta_B\colon B^\Sigma_\bullet(X)
\to B^\Sigma_\bullet(X)\otimes B^\Sigma_\bullet(X)$.
This map sends the summand corresponding to
$x_0,\ldots,x_k$ into the product of the summands
corresponding to the same sequence:
\begin{multline*}
	\Sigma X(x_0,x_1)\otimes\cdots\otimes\Sigma X(x_{k-1},x_k)
	\longrightarrow
	\\
	\left(\Sigma X(x_0,x_1)\otimes\cdots\otimes\Sigma X(x_{k-1},x_k)\right)
	\otimes
	\left(\Sigma X(x_0,x_1)\otimes\cdots\otimes\Sigma X(x_{k-1},x_k)\right)
\end{multline*}
It does so by using the diagonal map $\delta_{\Sigma X(x_{i-1},x_i)}$
for each factor, and then reshuffling the factors.
The map $\delta_B$ is indeed simplicial, and it again commutes
with the braiding.
(The verification that $\delta_B$ is simplicial is a straightforward but lengthy diagram chase.  
The verification
can be broken into cases, one for each
tuple $x_0,\ldots,x_k$ and each face or degeneracy map originating in the
corresponding summand of $B_k^\Sigma(X)$.
And in each case, the resulting square can be shown to commute by
using the definitions of the morphisms involved,
together with basic properties of $\V$, $\A$ and $\Sigma$.)

Now recall the Alexander-Whitney map.
Given an abelian category $\A$,
and simplicial objects $U$ and $V$ in $\A$,
the Alexander-Whitney map
$\AW\colon C_\ast(U\otimes V)\to C_\ast(U)\otimes C_\ast(V)$
is defined in degree $k$ to be the sum of the maps
\[
	(d_{p+1}\circ\cdots\circ d_k)\otimes (d_0\circ\cdots\circ d_0)
	\colon
	C_k(U\otimes V)=U_k\otimes V_k
	\longrightarrow 
	U_p\otimes V_q=C_p(U)\otimes C_q(V)
\]
for $p+q=k$.
It is natural, and the square
\[\xymatrix{
	C_\ast(U\otimes V) 
	\ar[r]^-{\AW}\ar[d]_{C_\ast(\tau)}
	&
	C_\ast(U)\otimes C_\ast(V)
	\ar[d]^\tau
	\\	
	C_\ast(V\otimes U) 
	\ar[r]_-{\AW}
	&
	C_\ast(V)\otimes C_\ast(U)
}\]
commutes up to chain homotopy, where $\tau$ denotes the braiding maps. 
(We prove the last claim 
using the classical papers~\cite{EM1} and~\cite{EM2} of Eilenberg-MacLane:
The Eilenberg-Zilber
map $\nabla\colon C_\ast(U)\otimes C_\ast(V)
\to C_\ast(U\otimes V)$ defined in \cite[(5.3)]{EM1}
is a chain homotopy inverse to $\AW$ by~\cite[Theorem~2.1]{EM2},
and the Eilenberg-Zilber map $\nabla$ commutes with the braiding
~\cite[Theorem~5.2]{EM1}, so that $\AW$ commutes with the braiding
up to chain homotopy.
The proofs in Eilenberg-MacLane are only stated in the case
$\A=\Ab$.  However, all maps involved are
\emph{FD}-operators~\cite[\S 3]{EM1}, or in other words $\Z$-linear
combinations of maps induced by maps $\beta\colon [p]\to [q]$,
and all verifications take place within the group of FD-operators,
so that all definitions and verifications can be transported
directly to the setting of an arbitrary $\A$.)

\begin{lemma}
\label{lemma-composite}
The composite
\begin{align*}
	C^\Sigma_\ast(X)
	&=C_\ast(B_\bullet^\Sigma(X))
	\\
	&\xrightarrow{C_\ast\delta_B}
	C_\ast(B_\bullet^\Sigma(X)\otimes B_\bullet^\Sigma(X))
	\\
	&\xrightarrow{\AW}
	C_\ast(B_\bullet^\Sigma(X))\otimes C_\ast(B_\bullet^\Sigma(X))
	\\
	&=
	C_\ast^\Sigma(X)\otimes C_\ast^\Sigma(X).
\end{align*}
is the chain level coproduct $\Delta$.
\end{lemma}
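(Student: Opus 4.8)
The plan is to prove the identity by a direct, term-by-term comparison on each summand of $C^\Sigma_\ast(X)$, exploiting the fact that both the coproduct $\Delta$ of Definition~\ref{definition-coproduct} and the composite $\AW\circ C_\ast\delta_B$ decompose as sums indexed by the splittings $p+q=k$. Fix a tuple $x_0,\ldots,x_k$ and consider the corresponding summand $\Sigma X(x_0,x_1)\otimes\cdots\otimes\Sigma X(x_{k-1},x_k)$ of $C^\Sigma_k(X)$. The map $\delta_B$ sends this into the summand indexed by $(x_0,\ldots,x_k)\otimes(x_0,\ldots,x_k)$ of $C_\ast(B^\Sigma_\bullet(X)\otimes B^\Sigma_\bullet(X))$, by applying the diagonal $\delta_{\Sigma X(x_{i-1},x_i)}$ to each tensor factor and then reshuffling the left and right copies apart. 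I would then apply the $(p,q)$-component of $\AW$: the front-face operator $d_{p+1}\circ\cdots\circ d_k$ on the first copy and the $p$-fold back-face operator $d_0\circ\cdots\circ d_0$ on the second.

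The crucial observation is that these front- and back-face operators involve only the outer face maps of the magnitude nerve, which erase tensor factors by terminality of $1_\V$ and never invoke composition in $X$. Thus the front-face operator simply projects the last $q$ tensor factors of the first copy to $1_\A$, and the back-face operator projects the first $p$ tensor factors of the second copy to $1_\A$. Consequently, for each original factor $\Sigma X(x_{i-1},x_i)$ with $i\leqslant p$ the diagonal $\delta_{\Sigma X(x_{i-1},x_i)}$ is immediately followed by the projection of its right-hand output to $1_\A$, while for $i>p$ its left-hand output is projected to $1_\A$. Here I would invoke the cartesian identity that the diagonal followed by projection onto either factor is the identity: the composite $A\xrightarrow{\delta_A}A\otimes A\xrightarrow{1_A\otimes {!}_A}A\otimes 1_\V\cong A$ equals $\mathrm{id}_A$, and applying the strong monoidal functor $\Sigma$ shows that $\Sigma A\xrightarrow{\delta_{\Sigma A}}\Sigma A\otimes\Sigma A\to\Sigma A\otimes 1_\A\cong\Sigma A$ is the identity (symmetrically for the other projection). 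Hence every surviving factor is transported by the identity, and the $(p,q)$-term of the composite lands in the $(x_0,\ldots,x_p)\otimes(x_p,\ldots,x_k)$ summand via the canonical isomorphism --- which is exactly the $(p,q)$-component of $\Delta$. Summing over $p+q=k$ recovers $\Delta$.

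I expect the main obstacle to be the careful bookkeeping of the reshuffle and the braiding of $\A$, to ensure that no spurious signs or permutations survive after the projections. The point is that the reshuffle in $\delta_B$ gathers the left copies of all factors into the first tensor slot and the right copies into the second, and that braiding a factor past one that is subsequently sent to $1_\A$ contributes nothing, by naturality of the braiding together with the coherence isomorphism $\tau_{A,1_\A}$. Restricted to the surviving factors --- the left copies of $1,\ldots,p$ and the right copies of $p+1,\ldots,k$ --- the reshuffle is therefore the identity reordering, so the surviving order agrees with the target of $\Delta$. I would establish this coherence statement once, and then the term-by-term comparison above assembles into the claimed equality. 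As a sanity check, the matching of the $\AW$ face-indices with the splittings of $\Delta$ is purely simplicial and can be verified directly in low degrees, for instance confirming that the degree-two $(1,1)$-term produces $(x_0,x_1)\otimes(x_1,x_2)$.
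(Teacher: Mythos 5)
Your proposal is correct and follows essentially the same route as the paper: identify the summands, observe that the $(p,q)$-component of $\AW\circ C_\ast\delta_B$ lands in the $(x_0,\ldots,x_p)$, $(x_p,\ldots,x_k)$ summand, and check it agrees with the corresponding component of $\Delta$. The only difference is that the paper declares this last check ``a tedious but routine verification'' and leaves it to the reader, whereas you actually carry it out, correctly isolating the two key points --- that the front- and back-face operators use only the outer (terminality-induced) faces, and that $\delta_{\Sigma A}$ followed by projection of either factor to $1_\A$ is the identity because $\V$ is cartesian and $\Sigma$ is strong monoidal.
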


\begin{proof}
Let us work in degree $k\geqslant 0$.
The domain of the map is the direct sum over sequences
$x_0,\ldots,x_k$ of the objects
\[
	\Sigma X(x_0,x_1)\otimes\cdots\otimes\Sigma X(x_{k-1},x_k).
\]
And the codomain of the map is the direct sum over pairs of sequences
$x_0,\ldots,x_p$, $y_p,\ldots,y_{k}$ of the objects
\[
	\left(\Sigma X(x_0,x_1)\otimes\cdots\otimes\Sigma X(x_{p-1},x_p)\right)
	\otimes
	\left(\Sigma X(y_p,y_{p+1})
	\otimes\cdots\otimes\Sigma X(y_{k-1},y_k)\right).
\]
Restricting to the $x_0,\ldots,x_k$ summand of the domain, 
$\AW\circ C_\ast\delta_B$ is the sum over $p+q=k$ of the maps
\[
	\left(
		(d_{p+1}\circ\cdots\circ d_k)\otimes (d_0\circ\cdots\circ d_0)
	\right)
	\circ
	\delta_B
\]
which land in the $x_0,\ldots,x_p$, $x_p,\ldots,x_k$ summand of
the codomain.  It therefore remains to show that this composite
is precisely the `rebracketing' map appearing in the definition
of $\Delta$.  This is a tedious but routine verification
that we leave to the reader.
\end{proof}

\begin{corollary}
If $\V$ is cartesian then
the chain level coproduct map $\Delta$ is cocommutative
up to chain homotopy, and the cochain level product
map $\mu$ is commutative up to chain homotopy.
\end{corollary}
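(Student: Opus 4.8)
The plan is to assemble the three facts established just above---the identification $\Delta = \AW \circ C_\ast(\delta_B)$ of Lemma~\ref{lemma-composite}, the \emph{strict} commutation of the simplicial diagonal $\delta_B$ with the braiding, and the commutation of the Alexander--Whitney map with the braiding \emph{up to chain homotopy}---into a single chain-homotopy computation, and then to dualise. The whole point is that the two inputs interact as one would hope: $\delta_B$ contributes the cocommutativity on the nose, while $\AW$ only degrades this to a statement up to homotopy.

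First I would recall that, writing $\tau$ for the braiding of $\A$ (and, by abuse, also for the simplicial braiding on $B^\Sigma_\bullet(X)\otimes B^\Sigma_\bullet(X)$), the coproduct $\Delta$ is \emph{cocommutative up to chain homotopy} precisely when the chain-level braiding satisfies $\tau\circ\Delta\simeq\Delta$. Using Lemma~\ref{lemma-composite} to write $\Delta = \AW\circ C_\ast(\delta_B)$, I would then compute
\begin{align*}
	\tau\circ\Delta
	&= \tau\circ\AW\circ C_\ast(\delta_B)\\
	&\simeq \AW\circ C_\ast(\tau)\circ C_\ast(\delta_B)\\
	&= \AW\circ C_\ast(\tau\circ\delta_B)\\
	&= \AW\circ C_\ast(\delta_B) = \Delta,
\end{align*}
where the homotopy in the second line is supplied by the Alexander--Whitney square, the third line is functoriality of $C_\ast$, and the fourth uses that $\delta_B$ commutes with the simplicial braiding, i.e.\ $\tau\circ\delta_B=\delta_B$. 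This establishes the first assertion.

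For the second assertion I would dualise. Since $\mu = [\Delta,1_\A]$ and the functor $[-,1_\A]$ carries chain homotopies to cochain homotopies, the relation $\tau\circ\Delta\simeq\Delta$ passes to $\mu\simeq\mu\circ\tau$, which is exactly commutativity of $\mu$ up to chain homotopy; here $\tau$ denotes the braiding of the symmetric monoidal category of chain complexes in $\A$ (so that the Koszul signs are built in, and passing to cohomology will yield the graded-commutativity of Theorem~\ref{theorem-commutativity}), and one uses that the dual of the chain-level braiding is the cochain-level braiding under the canonical comparison $[C,1_\A]\otimes[C,1_\A]\to[C\otimes C,1_\A]$ furnished by the closed symmetric monoidal structure of $\A$.

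Since the proof is essentially an assembly of prior results, the main subtlety---rather than any serious obstacle---lies in disentangling the two distinct braidings appearing in the computation: the simplicial braiding on $B^\Sigma_\bullet(X)\otimes B^\Sigma_\bullet(X)$, which $\delta_B$ respects on the nose, versus the chain-level braiding on $C_\ast\otimes C_\ast$, which $\AW$ respects only up to the Eilenberg--Zilber homotopy, together with checking that this homotopy transports correctly along $C_\ast(\delta_B)$ and dualises without sign trouble.
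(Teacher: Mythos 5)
Your proposal is correct and follows exactly the paper's argument: the paper's own proof is the one-line observation that the claim follows from Lemma~\ref{lemma-composite} together with the strict braiding-equivariance of $\delta_B$ and the homotopy braiding-equivariance of $\AW$, followed by dualisation. Your write-up simply makes the chain of homotopies explicit, which is a faithful expansion rather than a different route.
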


\begin{proof}
The claim about $\Delta$ follows from Lemma~\ref{lemma-composite}
together with the fact that $\delta_B$ commutes with the braiding
and that $\AW$ commutes with the braiding up to chain homotopy.
And the claim about $\mu$ follows from that about $\Delta$.
\end{proof}

Theorem~\ref{theorem-commutativity} now follows from the corollary
by taking homology.

\section{Magnitude cohomology of finite graphs}
\label{section-graph}

We now restrict our attention to finite (undirected) graphs, which we regard
as extended metric spaces by equipping their vertex sets
with the shortest-path metric.
Since all distances in a graph are integers, the magnitude
cohomology groups $\MH^k_\ell(G)$ of a graph $G$
are concentrated in bidegrees where $k$ and $\ell$ are
both non-negative integers.

The author and Willerton in~\cite[Section 7]{HepworthWillerton}
identified an important class of graphs
called \emph{diagonal graphs}.  These are graphs 
whose magnitude homology is concentrated on the diagonal, 
i.e.~$\MH_{k,\ell}(G)=0$ whenever $k\neq \ell$.
The magnitude of a diagonal graph is the power series 
$\sum_{\ell\geqslant 0}(-1)^\ell\cdot\rank(\MH_{\ell,\ell}(G))\cdot q^\ell$,
so that its coefficients alternate in sign, 
and indeed all known cases of graphs whose magnitude
has alternating coefficients are in fact diagonal.
Moreover, 
the magnitude of a diagonal graph also 
determines the magnitude homology up to isomorphism.
We showed in~\cite{HepworthWillerton}
that diagonality is preserved under cartesian
products and projecting decompositions, and that
any join of graphs is diagonal, so that
examples of diagonal graphs include complete graphs,
discrete graphs, trees, and complete multipartite graphs.
Gu~\cite{Gu} has shown that the icosahedral
graph is diagonal, and that all
\emph{pawful} graphs (a class which includes all joins
but contains more examples) are diagonal.

In this section we give a complete description of the
\emph{diagonal part} of the magnitude cohomology ring of a finite graph $G$,
by which we mean the graded subring consisting of the groups $\MH^k_k(G)$
for $k\geqslant 0$,  and we use it to give a complete description
of the magnitude cohomology rings of any diagonal graph,
which we then make explicit in several examples.
Our results relate the magnitude cohomology of a graph
to the path algebra of the associated quiver.

\begin{definition}[Path algebra of a graph]
Let $G$ be a graph.  An \emph{edge path} in $G$
is a sequence $x_0\cdots x_k$ of vertices of $G$
such that each pair $x_{i-1},x_i$ span an edge.
Sequences of length $k=0$ are allowed.
The \emph{path algebra} of $G$ is the $\Z$-algebra
with basis the edge paths in $G$, and multiplication
is given by concatenation, where possible:
\begin{equation}\label{equation-concatenation}
	(x_0\cdots x_k)\cdot(y_0\cdots y_l)
	=
	\left\{\begin{array}{ll}
		x_0\cdots x_k y_1\cdots y_l
		& \text{if }x_k=y_0
		\\
		0
		&\text{otherwise}
	\end{array}\right.
\end{equation}
Thus the path algebra of $G$ is the path algebra of the
quiver obtained from $G$ by doubling each edge to give
two oriented edges, one in each direction.
(See~\cite[section~1]{CrawleyBoevey} for the definition of quivers
and their path algebras. We have defined our path algebras
over $\Z$, but it seems that path algebras of quivers
are usually defined as $k$-algebras for a field $k$.)
We make the path algebra into a graded algebra using path-length.
\end{definition}

\begin{theorem}[The diagonal part of the magnitude cohomology]
\label{theorem-diagonal}
Let $G$ be a finite graph.  Then the 
diagonal part of $\MH_\ast^\ast(G)$,
by which we mean the graded subring consisting of the groups $\MH_k^k(G)$,
is isomorphic to the quotient of the path algebra of $G$
by the relations
\begin{equation}\label{equation-diagonalrelations}
	\sum_{y\colon x\prec y\prec z} xyz=0
\end{equation}
for each pair of vertices $x,z$ with $d(x,z)=2$.
The symbol $x\prec y\prec z$ indicates that the sum is taken
over all $y$ for which $d(x,y)=d(y,z)=1$.
\end{theorem}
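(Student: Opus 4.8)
The plan is to work entirely with the diagonal cochains and to identify the cohomology as an explicit cokernel. First I would note that on the diagonal a simplex of degree $k$ and length $k$ is precisely an \emph{edge path} $x_0\cdots x_k$: consecutive entries are distinct so each $d(x_{i-1},x_i)\geqslant 1$, and the total length is $k$ exactly when every such distance equals $1$, i.e.\ when consecutive vertices span an edge. Thus $\MC^k_k(G)$ has basis the cochains $e_P$ dual to edge paths $P$ of length $k$, and I would use $e_P\mapsto P$ to identify $\MC^k_k(G)$ with the degree-$k$ component of the path algebra. Under the product of Definition~\ref{definition-magcoh} one checks that $(e_P\cutprod e_{P'})$ takes the value $1$ on a simplex exactly when that simplex is the concatenation of $P$ and $P'$ (which forces the last vertex of $P$ to equal the first vertex of $P'$), so $e_P\cutprod e_{P'}=e_{P\ast P'}$ when the paths are composable and $0$ otherwise. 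This is concatenation in the path algebra, and the unit $u=\sum_x e_x$ is the sum of the length-$0$ paths, so the diagonal cochains with their product form a graded ring isomorphic to the path algebra of $G$.

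Next I would pin down the cohomology as a quotient. Since degree $k+1$ forces length at least $k+1$, there are no $(k+1)$-simplices of length $k$, so $\MC^{k+1}_k(G)=0$; hence every diagonal cochain is automatically a cocycle and
\[
	\MH^k_k(G)=\MC^k_k(G)\big/\operatorname{im}\bigl(\partial^\ast\colon\MC^{k-1}_k(G)\to\MC^k_k(G)\bigr).
\]
Because in any differential graded algebra the coboundaries form a two-sided ideal among the cocycles, this image is a two-sided ideal of the path algebra and the ring structure on $\MH^\ast_\ast(G)$ is the induced quotient structure. So it remains only to identify this ideal.

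The heart of the argument is the computation of $\partial^\ast$ on a basis of $\MC^{k-1}_k(G)$. A $(k-1)$-simplex of length $k$ has exactly one step of distance $2$ and all other steps of distance $1$; writing such a $Q$ as a composite $a\cdot(x,z)\cdot b$ of an edge path $a$ ending at $x$, a single distance-$2$ step $x\to z$, and an edge path $b$ starting at $z$, I would show that dualising $\partial$ gives $\partial^\ast e_Q=\pm\sum_{x\prec y\prec z}e_{a\ast(xyz)\ast b}$, since the only simplices whose boundary hits $Q$ are the edge paths obtained by inserting a common neighbour $y$ of $x$ and $z$ into the distance-$2$ gap. Translating back through the path-algebra identification turns this into $\pm\, a\cdot\bigl(\sum_{x\prec y\prec z}xyz\bigr)\cdot b$. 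As $Q$ ranges over the basis, $a$ and $b$ range over all edge paths with the appropriate endpoints, so the image of $\partial^\ast$ is exactly the two-sided ideal generated by the relators $\sum_{x\prec y\prec z}xyz$ over all pairs with $d(x,z)=2$, giving the claimed graded-ring isomorphism.

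The main obstacle is this last computation. I expect the delicate bookkeeping to lie in verifying that no simplex other than the common-neighbour insertions contributes to $\partial^\ast e_Q$ (using that $Q$ has a \emph{unique} distance-$2$ step), in tracking the sign $(-1)^j$ coming from the position of the gap, and finally in arguing that the spanning set $\{a\cdot r_{x,z}\cdot b\}$ coincides with the full two-sided ideal generated by the relators $r_{x,z}$ — neither larger nor smaller — which relies on the fact that products of edge paths are again edge paths (or zero).
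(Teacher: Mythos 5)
Your proposal is correct and follows essentially the same route as the paper: identify the diagonal cochains with the path algebra via dual basis elements and the concatenation product, observe that $\MC^{k+1}_k(G)=0$ so that $\MH^k_k(G)$ is the cokernel of $\partial^\ast\colon\MC^{k-1}_k(G)\to\MC^k_k(G)$, and compute that coboundary on the basis of once-gapped sequences to obtain exactly the relators $\sum_{x\prec y\prec z}xyz$. The only cosmetic difference is that you package the image of $\partial^\ast$ as the two-sided ideal generated by the relators via the Leibniz rule, whereas the paper directly defines the map from the quotient path algebra and checks it is a ring isomorphism on bases; these are the same argument.
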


A diagonal graph has \emph{torsion-free} magnitude homology 
concentrated in the groups $\MH_{k,k}(G)$.
(See the proof of Proposition~7.2 of~\cite{HepworthWillerton}.)
The universal coefficient theorem of Remark~\ref{remark-universal}
then guarantees that the
magnitude cohomology of $G$ is torsion-free and concentrated
in the groups $\MH_k^k(G)$.  Thus we obtain the following.

\begin{corollary}\label{corollary-diagonal}
If $G$ is a diagonal graph, then $\MH_\ast^\ast(G)$
is exactly isomorphic to the quotient of the path
algebra described in Theorem~\ref{theorem-diagonal}.
\end{corollary}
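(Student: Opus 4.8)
The statement to prove is Corollary~\ref{corollary-diagonal}, which asserts that for a \emph{diagonal} graph $G$ the \emph{entire} magnitude cohomology ring $\MH_\ast^\ast(G)$ — not merely its diagonal part — coincides with the path-algebra quotient of Theorem~\ref{theorem-diagonal}. Since Theorem~\ref{theorem-diagonal} already identifies the diagonal subring $\bigoplus_k \MH^k_k(G)$ with that quotient, the whole content of the corollary is the claim that, for a diagonal graph, there is \emph{nothing off the diagonal}: every group $\MH^k_\ell(G)$ with $k\neq\ell$ vanishes, and the natural inclusion of the diagonal part into the full ring is therefore an isomorphism of graded rings. So the plan is to reduce the corollary to this vanishing statement and then establish the vanishing from the hypothesis of diagonality.

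\textbf{First step: recall what diagonality gives on homology.} By definition a diagonal graph has $\MH_{k,\ell}(G)=0$ whenever $k\neq\ell$. I would invoke the cited fact (from the proof of Proposition~7.2 of~\cite{HepworthWillerton}) that in fact the surviving groups $\MH_{k,k}(G)$ are \emph{torsion-free}. Thus for a diagonal graph the magnitude homology is a free abelian group concentrated on the diagonal.

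\textbf{Second step: transport this to cohomology via universal coefficients.} Here I would feed the homological input into the universal coefficient sequence of Remark~\ref{remark-universal},
\[
	0\to \mathrm{Ext}(\MH_{k-1,\ell}(G),\Z)
	\longrightarrow
	\MH^k_\ell(G)
	\longrightarrow
	\Hom(\MH_{k,\ell}(G),\Z)
	\longrightarrow
	0.
\]
For $k\neq\ell$ both flanking terms vanish: $\Hom(\MH_{k,\ell}(G),\Z)=0$ because $\MH_{k,\ell}(G)=0$, and $\mathrm{Ext}(\MH_{k-1,\ell}(G),\Z)=0$ because $\MH_{k-1,\ell}(G)$ is either $0$ (when $k-1\neq\ell$) or torsion-free and hence free (when $k-1=\ell$, since $\mathrm{Ext}(F,\Z)=0$ for $F$ free). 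Hence $\MH^k_\ell(G)=0$ for all $k\neq\ell$, so the magnitude cohomology of a diagonal graph is concentrated on the diagonal. (Note this pair of observations is exactly the content of the paragraph immediately preceding the corollary, so it can be stated compactly.)

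\textbf{Third step: conclude the ring isomorphism.} Since $\MH^k_\ell(G)=0$ off the diagonal, as a bigraded group $\MH_\ast^\ast(G)$ equals its diagonal part $\bigoplus_k\MH^k_k(G)$. The product $\MH^j_\ell(G)\otimes\MH^k_m(G)\to\MH^{j+k}_{\ell+m}(G)$ respects both gradings, so the diagonal part is automatically a graded subring, and the equality of underlying groups upgrades to an equality of graded rings: the inclusion of the diagonal subring into $\MH_\ast^\ast(G)$ is an isomorphism of rings. Composing with the ring isomorphism of Theorem~\ref{theorem-diagonal} identifies all of $\MH_\ast^\ast(G)$ with the path-algebra quotient, as claimed. \textbf{The main (and essentially only) subtlety} is the second step: one must be careful that both terms of the universal coefficient sequence vanish off the diagonal, and in particular that the $\mathrm{Ext}$ term vanishes — which is precisely why the \emph{torsion-freeness} of $\MH_{k,k}(G)$, and not just its diagonal concentration, is needed. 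Everything else is formal.
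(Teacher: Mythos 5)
Your proposal is correct and matches the paper's own argument, which is exactly the paragraph preceding the corollary: torsion-free magnitude homology concentrated on the diagonal (citing the proof of Proposition~7.2 of~\cite{HepworthWillerton}), fed into the universal coefficient sequence of Remark~\ref{remark-universal} to kill the off-diagonal cohomology, and then Theorem~\ref{theorem-diagonal} applied to the diagonal part. Your only implicit step beyond the paper's wording is that ``torsion-free hence free'' uses finite generation of $\MH_{k,\ell}(G)$, which holds since $G$ is a finite graph.
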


\begin{example}[Trees]
Let $T$ be a finite tree with $n$ vertices.  
Then $T$ is diagonal, and its magnitude homology
was computed in~\cite[Corollary~6.8]{HepworthWillerton}.
If $x,z$ are vertices of $T$
with $d(x,z)=2$, then there is a unique vertex $y$ such that
$d(x,y)=d(y,z)=1$, and it follows that $xy\cdot yz=0$ 
in $\MH^2_2(T)$.
The only edge paths that are not rendered $0$ by this
relation are the ones of the form
\[	
	\underbrace{abab\cdots}_{(k+1)\text{ terms}}
\]
for $k\geqslant 0$ and $d(a,b)=1$, 
and these elements form a basis of $\MH_\ast^\ast(T)$.
This is dual to the description of $\MH_{\ast,\ast}(T)$
given in~\cite{HepworthWillerton}.
\end{example}

\begin{example}[Complete graphs]
\label{example-complete}
In Example~2.5 of~\cite{HepworthWillerton} it was shown that 
the complete graph $K_n$ on $n$ vertices is diagonal.
We may therefore apply Corollary~\ref{corollary-diagonal}.
Any sequence of vertices in $K_n$ is an edge path,
and there are no pairs $x,y$ with $d(x,y)=2$,
so $\MH^\ast_\ast(K_n)$ is precisely the algebra
with basis given by all finite sequences of vertices of $K_n$,
with product given by concatenation~\eqref{equation-concatenation}.
\end{example}

\begin{example}[Complete bipartite graphs]
The complete bipartite graph $K$ on two nonempty sets $X$ and $Y$
is the join of the graphs with $X$ and $Y$ as vertex sets and no edges.
It is diagonal by Theorem~7.5 of \cite{HepworthWillerton}, and
Corollary~\ref{corollary-diagonal} therefore applies.
The path algebra of $K$ has basis given by the finite sequences in $X\sqcup Y$
alternating between elements of $X$ and elements of $Y$,
and $\MH^\ast_\ast(K)$ is the quotient of this by the relations
\[
	\sum_{y\in Y} x_1yx_2=0,
	\qquad
	\sum_{x\in X}y_1xy_2=0
\]
for every pair of distinct elements $x_1,x_2\in X$ 
and $y_1,y_2\in Y$.
\end{example}

\begin{example}[The icosahedral graph]
Let $G$ denote the graph obtained by taking 
the $1$-skeleton of the icosahedron.
Theorem~4.5 of~\cite{Gu} shows that $G$ is diagonal,
so that we may apply Corollary~\ref{corollary-diagonal}.
The result is that $\MH^\ast_\ast(G)$
is the path algebra of $G$ modulo the ideal generated by the `diamond moves'
$xyz = -xy'z$ whenever $x,y,y',z$ form a diamond whose
points are $x$ and $z$:
\[\begin{tikzpicture}
	\draw[thick,fill=white!80!black] 
		(0,1) --(1.73,0) -- (0,-1) -- (-1.73,0) -- (0,1) -- (0,-1);
	\node[above] at (0,1) {$y$};
	\node[below] at (0,-1) {$y'$};
	\node[left] at (-1.73,0) {$x$};
	\node[right] at (1.73,0) {$z$};
\end{tikzpicture}\]
\end{example}

\begin{proof}[Proof of Theorem~\ref{theorem-diagonal}]
The magnitude chain group $\MC_{k,\ell}(G)$ has basis
given by the $k$-simplices $(x_0,\ldots,x_k)$ of length $\ell$.
We equip the magnitude cochain group $\MC_\ell^k(G)$
with the dual basis, denoting the dual to $(x_0,\ldots,x_k)$
by $(x_0,\ldots,x_k)^\ast$.
Restricting our attention to the following part of the magnitude
cochain complex,
\[
	\MC_k^{k-1}(G)\xrightarrow{\ \partial^\ast\ }
	\MC_k^k(G)\xrightarrow{\ \partial^\ast\ }
	\MC_k^{k+1}(G)
\]
we see that $\MC_k^{k+1}(G)=0$ because any simplex of 
degree $(k+1)$ has length $\ell\geqslant (k+1)$, 
that $\MC_k^k(G)$ has basis given by the
$(x_0,\ldots,x_k)^\ast$ in which each pair
$x_{j-1},x_j$ spans an edge,
and that $\MC_{k}^{k-1}(G)$ has basis given by
the $(x_0,\ldots,x_{i-1},x_{i+1},\ldots,x_k)^\ast$
where each $x_{j-1},x_j$ is an edge and where $d(x_{i-1},x_{i+1})=2$.
One can check that the boundary map is determined by the rule
\begin{equation}\label{equation-relations}
	\partial^\ast(x_0,\ldots,x_{i-1},x_{i+1},\ldots, x_k)^\ast
	=
	(-1)^i
	\sum_{x_i\colon x_{i-1}\prec x_i\prec x_{i+1}}(x_0,\ldots, x_k)^\ast
\end{equation}
where, again, the symbol $x_{i-1}\prec x_i\prec x_{i+1}$
indicates that $d(x_{i-1},x_i)=d(x_i,x_{i+1})=1$.
Thus $\MH_k^k(G)$ is the quotient of the $\Z$-module
with basis the dual simplices $(x_0,\ldots,x_k)^\ast$ in which each
$x_{j-1},x_j$ is an edge, by the right-hand-sides of the
equations~\eqref{equation-relations}.

Now, we let $A_\ast$ denote the quotient of the path algebra
of $G$ by the relations~\eqref{equation-diagonalrelations}.
Then $A_k$ is the $\Z$-module with basis the edge paths $x_0\cdots x_k$,
modulo, for each sequence $x_0,\ldots,x_{i-1},x_{i+1}\ldots,x_k$
in which consecutive entries are edges except that $d(x_{i-1},x_{i+1})=2$,
the relation
$\sum_{x_i\colon x_{i-1}\prec x_i\prec x_{i+1}}x_0\cdots x_k$.
Thus, the map $A_k\to\MH_k^k(G)$, $x_0\cdots x_k\mapsto [(x_0,\ldots,x_k)^\ast]$
is an isomorphism of $\Z$-modules.
It remains to show that it is a map of graded rings,
but that is evident from the formula
\[
	(x_0,\ldots ,x_k)^\ast\cutprod(y_0,\ldots, y_l)^\ast
	=
	\left\{\begin{array}{ll}
		(x_0,\ldots, x_k, y_1,\ldots, y_l)^\ast
		& \text{if }x_k=y_0
		\\
		0
		&\text{otherwise}
	\end{array}\right.
\]
which is easily verified.
\end{proof}

\section{The magnitude cohomology of odd cyclic graphs}
\label{section-cyclic}

In this section we give an extended example: 
the magnitude cohomology ring of the cyclic graph $C_n$ 
with an odd number of vertices $n=(2m+1)\geqslant 5$.
The case of $C_1=K_1$ and $C_3=K_3$ is 
covered in Example~\ref{example-complete} above.  
Our computation of the magnitude {cohomology ring} is based on
Gu's computation of the magnitude {homology groups} 
given in section~4.4 of~\cite{Gu}.

\begin{theorem}\label{theorem-cyclic}
Let $n=2m+1$ where $m\geqslant 2$,
and let $C_n$ denote the cyclic graph on $n$ vertices.
Then the magnitude cohomology ring $\MH^\ast_\ast(C_n)$ 
is the bigraded associative ring with the following presentation.
The generators are:
\begin{itemize}
	\item
	$e_x\in\MH_0^0(C_n)$ for vertices $x$ of $C_n$.
	\item
	$\uu_{xy}\in\MH_1^1(C_n)$ for oriented edges $xy$ of $C_n$.
	\item
	$\vv_{xz}\in\MH^2_{m+1}(C_n)$ for ordered pairs $x,z$ with $d(x,z)=m$.
\end{itemize}
And the relations are:
\begin{itemize}
	\item
	$e_x^2=e_x$ for every vertex $x$.
	\item
	$e_xe_y=0$ for distinct vertices $x,y$.
	\item
	$\uu_{xy}=e_x\uu_{xy}=\uu_{xy}e_y$ for every oriented edge $xy$.
	\item
	$\vv_{xz}=e_x\vv_{xz}=\vv_{xz}e_z$ for every $x,z$ with $d(x,z)=m$.
	\item
	$\uu_{xy}\uu_{yz}=0$ if $xy$ and $yz$ are oriented edges with $x\neq z$.
	\item
	$\uu_{wx}\vv_{xz}=\vv_{wy}\uu_{yz}$ for every $w,x,y,z$
	in cyclic order with 
	$d(w,x)=1$, $d(x,y)=m$, $d(y,z)=1$.
	\[\begin{tikzpicture}[scale=0.18,baseline=0]
		\path[draw] (0+90:10)
			-- (360/7+90:10)
			-- (2*360/7+90:10)
			-- (3*360/7+90:10)
			-- (4*360/7+90:10)
			-- (5*360/7+90:10)
			-- (6*360/7+90:10)
			-- cycle;
		\node() at (3*360/7+90:11) {$w$};
		\node() at (4*360/7+90:11) {$x$};
		\node() at (7*360/7+90:11) {$y$};
		\node() at (8*360/7+90:11) {$z$};
		\path[draw,red] 	
			(3*360/7+90+10:12) arc (3*360/7+90+10:4*360/7+90-10:12);
		\path[draw,red] 	
			(4*360/7+90+10:12) arc (4*360/7+90+10:7*360/7+90-10:12);
		\path[draw,red] 	
			(7*360/7+90+10:12) arc (7*360/7+90+10:8*360/7+90-10:12);
		\node[red]() at (3.5*360/7+90:13) {$\scriptstyle 1$};
		\node[red]() at (5.5*360/7+90:13) {$\scriptstyle m$};
		\node[red]() at (7.5*360/7+90:13) {$\scriptstyle 1$};
	\end{tikzpicture}
	\qquad\quad
	\begin{tikzpicture}[scale=0.18,baseline=0]
		\path[draw] (0+90:10)
			-- (360/7+90:10)
			-- (2*360/7+90:10)
			-- (3*360/7+90:10)
			-- (4*360/7+90:10)
			-- (5*360/7+90:10)
			-- (6*360/7+90:10)
			-- cycle;
		\node() at (3*360/7+90:11) {$w$};
		\node() at (4*360/7+90:11) {$x$};
		\node() at (8*360/7+90:11) {$z$};
		\path[draw,blue] 	
			(3*360/7+90+10:12) arc (3*360/7+90+10:4*360/7+90-10:12);
		\path[draw,blue] 	
			(4*360/7+90+10:12) arc (4*360/7+90+10:8*360/7+90-10:12);
		\node[blue]() at (3.5*360/7+90:14) {$\uu_{wx}$};
		\node[blue]() at (6*360/7+90:14) {$\vv_{xz}$};
	\end{tikzpicture}
	\qquad\quad
	\begin{tikzpicture}[scale=0.18,baseline=0]
		\path[draw] (0+90:10)
			-- (360/7+90:10)
			-- (2*360/7+90:10)
			-- (3*360/7+90:10)
			-- (4*360/7+90:10)
			-- (5*360/7+90:10)
			-- (6*360/7+90:10)
			-- cycle;
		\node() at (3*360/7+90:11) {$w$};
		\node() at (7*360/7+90:11) {$y$};
		\node() at (8*360/7+90:11) {$z$};
		\path[draw,blue] 	
			(3*360/7+90+10:12) arc (3*360/7+90+10:7*360/7+90-10:12);
		\path[draw,blue] 	
			(7*360/7+90+10:12) arc (7*360/7+90+10:8*360/7+90-10:12);
		\node[blue]() at (5*360/7+90:14) {$\vv_{wy}$};
		\node[blue]() at (7.5*360/7+90:14) {$\uu_{yz}$};
	\end{tikzpicture}\]
\end{itemize}
\end{theorem}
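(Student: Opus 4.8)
The plan is to build a graded ring homomorphism from the abstractly presented ring onto $\MH^\ast_\ast(C_n)$ using explicit cocycle representatives, and then to prove it is an isomorphism by matching ranks in each bidegree against Yuzhou Gu's computation of the magnitude homology; throughout I write $R$ for the bigraded ring defined by the listed generators and relations. I would first record the homological input: Gu's computation gives the groups $\MH_{k,\ell}(C_n)$, which (as I would check from it) are finitely generated and free, so the $\mathrm{Ext}$-terms in the universal coefficient sequence of Remark~\ref{remark-universal} vanish and each $\MH^k_\ell(C_n)$ is the free dual of $\MH_{k,\ell}(C_n)$. This fixes both the support and the rank of the target in every bidegree. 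A quick bookkeeping with the generators shows that any monomial with $p$ factors $\uu$ and $j$ factors $\vv$ lands in bidegree $(k,\ell)=(p+2j,\,p+j(m+1))$, so that $\ell-k=j(m-1)$; thus the ring is supported on the lines $\ell-k=j(m-1)$ for $j\geqslant 0$, and I would confirm Gu's ranks are consistent with this.

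Second I would fix cochain representatives. The classes $e_x$ and $\uu_{xy}$ are exactly those of Proposition~\ref{proposition-zeroone}, using that in $C_n$ a pair of vertices is adjacent precisely when it spans an edge, so $\adj(C_n,\ell)$ is nonempty only for $\ell=1$. For $\vv_{xz}$, with $d(x,z)=m$, I would write down a $2$-cocycle in $\MC^2_{m+1}(C_n)$ built from the duals of the degree-$2$, length-$(m+1)$ simplices $(x,a,z)$; since such an intermediate vertex $a$ lies off every geodesic from $x$ to $z$, each $(x,a,z)$ is automatically a magnitude cycle, and I would verify that the chosen cochain is closed and that the resulting classes span $\MH^2_{m+1}(C_n)$.

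Third I would assemble a graded ring map $\Phi\colon R\to\MH^\ast_\ast(C_n)$ sending the generators to these classes and check that all the relations hold. The idempotent relations and the bimodule relations $\uu_{xy}=e_x\uu_{xy}=\uu_{xy}e_y$ and $\vv_{xz}=e_x\vv_{xz}=\vv_{xz}e_z$ follow from the $\MH^0_0$-bimodule description in Proposition~\ref{proposition-zeroone}. The relation $\uu_{xy}\uu_{yz}=0$ for $x\neq z$ is precisely the distance-$2$ relation of Theorem~\ref{theorem-diagonal} applied to $C_n$: the two neighbours $x,z$ of $y$ satisfy $d(x,z)=2$ and have $y$ as their unique common neighbour, so the sum in~\eqref{equation-diagonalrelations} has a single term. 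The one genuinely new relation is the sliding relation $\uu_{wx}\vv_{xz}=\vv_{wy}\uu_{yz}$ in $\MH^3_{m+2}(C_n)$; I would verify it by the Kronecker pairing, showing that both sides pair identically with every element of a basis of $\MH_{3,m+2}(C_n)$, equivalently by exhibiting a cochain whose coboundary is their difference.

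Finally I would show $\Phi$ is bijective. Surjectivity follows once the explicit classes are known to span each nonzero cohomology group, which the second step provides in low degree and which propagates to all bidegrees because every higher class is a product of generators. Injectivity is the crux and the main obstacle: I would put monomials of $R$ into a normal form, using $\uu_{xy}\uu_{yz}=0$ to force the $\uu$-segments to backtrack and using the sliding relation to move the $\uu$-factors to a canonical side of the $\vv$-factors, and then count the surviving monomials in each bidegree. The hard part is to show that these normal-form monomials are linearly independent in $R$ and that their number in bidegree $(k,\ell)$ matches the rank read off from Gu's computation; the interaction of several $\vv$-factors under the sliding relation is where the combinatorics is most delicate. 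Once the two counts agree in every bidegree, surjectivity together with the equality of ranks forces $\Phi$ to be an additive isomorphism, and since $\Phi$ is a ring map it is the desired ring isomorphism.
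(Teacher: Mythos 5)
Your overall strategy---classes dual to Gu's basis of admissible simplices, verification of the relations via the Kronecker pairing, a normal form for monomials in the presented ring, and a comparison with the target---is the same as the paper's. The one place where you diverge, and where your plan as written would get stuck, is the endgame. You propose to prove injectivity by showing that the normal-form monomials are linearly independent in the presented ring $R$ and that their number in each bidegree matches the rank of $\MH^k_\ell(C_n)$; you flag this as the hard part, but neither half of it is needed. The paper's argument is: (a) the rewriting process (kill monomials with mismatched endpoints or with a repeated $\uu$ inside a maximal clockwise/anticlockwise block, and use the sliding relation to push $\uu$'s past $\vv$'s) shows only that the monomials $p_{\bfx}$ indexed by admissible simplices $\bfx$ \emph{span} $R$, with no independence claim; (b) a direct Kronecker-pairing computation shows $\langle p_{\bfx},[\bfy]\rangle=\delta_{\bfx\bfy}$ against Gu's basis, so the images of the $p_{\bfx}$ form a basis of $\MH^\ast_\ast(C_n)$. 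A ring homomorphism carrying a spanning set to a basis is automatically an isomorphism, so injectivity and surjectivity fall out simultaneously---note that your surjectivity argument (``every higher class is a product of generators'') is itself exactly what needs proof, and it too is delivered by (b). The computation in (b) is concrete but not entirely trivial: one must argue that $\langle p_{\bfx},[\bfy]\rangle=0$ for admissible $\bfy\neq\bfx$, which comes down to showing that two admissible simplices whose pieces share endpoints can differ only by swapping a code $(1,m)$ for $(m,1)$ (or the negatives), and that admissibility then forbids the difference. Replacing your rank count with this pairing computation turns your outline into the paper's proof.
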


In order to prove the theorem we start by establishing notation.
Fix a `clockwise' direction on the vertices of $C_n$.

\begin{definition}[Codes and admissible simplices]
\hfill
\begin{itemize}
	\item
	Given a vertex $x$ of $C_n$ and $i\in\{-m,\ldots,0,\ldots,m\}$,
	we let $x+i$ denote the vertex obtained by moving $|i|$ places from $x$,
	clockwise if $i>0$, and anticlockwise if $i<0$.

	\item
	Given a simplex $\bfx=(x_0,\ldots,x_k)$,
	we obtain a sequence $\bfi=(i_1,\ldots, i_k)$
	with entries in $\{-m,\ldots,-1,1,\ldots,m\}$ 
	defined by $x_j = x_{j-1} +i_j$.
	We call $\bfi$ the \emph{code} of $\bfx$.
	Note that the code of a $0$-simplex $(x_0)$ is 
	the empty tuple $()$.

	\item
	A code is called \emph{admissible} if, after dividing it into
	the maximal subsequences whose entries all have the same sign,
	the subsequences all have one of the following forms for some
	$j\geqslant 0$.
	\[
		\underbrace{(1,m,1,m,\ldots)}_{j\text{ entries}}
		\qquad
		\underbrace{(-1,-m,1,-m\ldots)}_{j\text{ entries}}
	\]
	Note that while these sequences begin with $\pm 1$, they
	can end with $\pm 1$ or $\pm m$, according to whether
	$j$ is odd or even.

	\item
	A simplex is \emph{admissible} if its code is admissible.
\end{itemize}
\end{definition}

\begin{example}
The codes $(1,m,1,-1,1,m)$ and $(1,-1,1,-1)$ are admissible,
but $(m,1)$ and $(-1,-m,1,1,m)$ are not.
\end{example}

\begin{note}
Let $\bfx=(x_0,\ldots,x_k)$ be an admissible simplex with code
$\bfi=(i_1,\ldots,i_k)$.  Then $\bfx$ has degree $k$
and length $\ell=\sum_{j=1}^k |i_j|$.
\end{note}

\begin{theorem}[{Gu~\cite{Gu}}]
\label{theorem-gu}
The admissible simplices $\bfx$ 
are cycles in $\MC_{\ast,\ast}(C_n)$,
and their homology classes $[\bfx]$ form a basis for $\MH_{\ast,\ast}(C_n)$.
\end{theorem}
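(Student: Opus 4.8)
The plan is to work at the chain level and exploit the fact that the magnitude differential on $C_n$ uses only \emph{inner} faces, so it never deletes the endpoints $x_0$ and $x_k$. First I would rephrase the differential in terms of codes: for a simplex with code $\bfi=(i_1,\ldots,i_k)$, the face $\partial_j$ deleting $x_j$ is nonzero precisely when $i_j$ and $i_{j+1}$ have the same sign and $|i_j|+|i_{j+1}|\leqslant m$, in which case it merges the two entries into $i_j+i_{j+1}$. (In the opposite-sign case the straightline displacement has absolute value strictly less than $|i_j|+|i_{j+1}|$, and in the same-sign case with $|i_j|+|i_{j+1}|>m$ the geodesic runs the other way; either way the geodesic condition fails.) The key consequence is that merging is impossible across a sign change, and impossible within a run once two adjacent entries are $\{1,m\}$, since $1+m>m$; hence the differential can neither alter the sign pattern of a code nor change the signed total of any of its maximal constant-sign runs.

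Therefore the magnitude chain complex splits, after fixing the initial vertex $x_0$, as a direct sum indexed by \emph{run profiles} (the alternating sequence of signs together with the total of each run), and each summand is a tensor product with one factor $K(T)$ per run, where $K(T)$ is the complex whose degree-$k$ generators are the compositions of $T$ into $k$ parts in $\{1,\ldots,m\}$ and whose differential merges adjacent parts of sum at most $m$. (A negative run gives the same complex by reflection.) As all groups are free over $\Z$, the K\"unneth theorem reduces everything to $H_\ast(K(T))$. Here I would also record the easy direct check, immediate from the merge criterion above, that an admissible simplex has every face zero and is thus a cycle.

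The heart of the matter is the claim that $H_\ast(K(T))$ is $\Z$ concentrated in the degree of the unique admissible composition of $T$ whenever $T\equiv 0$ or $1\pmod{m+1}$, and is $0$ otherwise, with the generator represented by that admissible composition. I would prove this by induction on $T$ via the short exact sequence of complexes
\[
	0\longrightarrow K(T-(m{+}1))\longrightarrow K(T)\longrightarrow Q(T)\longrightarrow 0,
\]
whose left map prepends the block $(1,m)$ (thereby raising homological degree by $2$) and identifies $K(T-(m{+}1))$ with the subcomplex of compositions beginning $(1,m,\ldots)$ — this is a subcomplex precisely because neither the leading $1,m$ nor $m,b_1$ can be merged — while $Q(T)$ is the quotient spanned by compositions not beginning $(1,m)$. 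The base cases $1\leqslant T\leqslant m$ are immediate: there the bound $m$ is never active, so $K(T)$ is the augmented simplicial chain complex of a full simplex on the $T-1$ interior cut points, giving $H_\ast=\Z$ for $T=1$ (the class of $(1)$) and $H_\ast=0$ for $2\leqslant T\leqslant m$. For the inductive step it suffices that $Q(T)$ be acyclic, which I would obtain from the perfect, fixed-point-free matching that peels or inserts a leading $1$, namely $(a_1,a_2,\ldots)\mapsto(1,a_1-1,a_2,\ldots)$ for $a_1\geqslant 2$, inverse to merging the leading $1$ when $a_1=1$ (inside $Q(T)$ one has $a_2\neq m$, so this merge is legal). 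With $Q(T)$ acyclic the prepending map is a quasi-isomorphism, and the degree shift by $2$ together with the fact that prepending $(1,m)$ sends the inductive admissible composition of $T-(m{+}1)$ to that of $T$ closes the induction.

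Finally I would reassemble: under the run decomposition and K\"unneth, $\MH_{\ast,\ast}(C_n)$ is free, with a basis given in each bidegree by the tensor products of the per-run generators, and these products are exactly the admissible simplices of that degree and length — one for each choice of initial vertex and admissible run profile. Since the admissible simplices are already known to be cycles and biject with the basis produced by the decomposition, their classes form a basis of $\MH_{\ast,\ast}(C_n)$. The main obstacle is the computation of $H_\ast(K(T))$ — concretely, the acyclicity of $Q(T)$ and the concentration of the homology in a single degree. The generating-function identity $\sum_T\chi(K(T))\,x^T=(1-x)/(1-x^{m+1})$ is a useful consistency check on the ranks, but being only an Euler-characteristic statement it does not by itself yield the concentration, which is exactly why the inductive short exact sequence (or, equivalently, an explicit acyclic matching with critical cells the admissible compositions) is required.
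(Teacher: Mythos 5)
Your argument is correct, and its first half coincides with the proof the paper relegates to its (unpublished) appendix: both reduce the differential to the code-level merge criterion (nonzero iff equal signs and $|i_j|+|i_{j+1}|\leqslant m$), both split $\MC_{\ast,\ast}(C_n)$ by initial vertex and sign pattern, and both identify each summand as a tensor product of one-run complexes, handled by K\"unneth since everything is free. Where you genuinely diverge is in the key computation. The paper keeps all run totals together in a single complex $W_\ast$ and computes its homology in one stroke with an algebraic Morse matching (pairing $(\square,k-1,1,m,\ldots,1)$ with $(\square,k,m,\ldots,1)$, citing Sk\"oldberg), whose critical cells are the admissible codes written so as to \emph{end} in $1$. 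You instead grade by the run total $T$ and induct via the short exact sequence that peels off a leading $(1,m)$ block, reducing to the acyclicity of the quotient $Q(T)$ via the leading-$1$ split/merge pairing. Your route is more elementary and self-contained (no appeal to the algebraic Morse theory machinery, and the base cases $1\leqslant T\leqslant m$ are just augmented simplices), at the cost of two pieces of bookkeeping you should make explicit: (i) the case $T=m+1$ needs the convention $K(0)=\Z$ concentrated in degree $0$ (the empty composition) for the subcomplex to be $K(T-(m+1))[2]$; and (ii) a "perfect fixed-point-free matching" does not by itself kill homology --- you must either check the matching is acyclic (easy here: after an up-move the resulting composition starts with $1$, and any unmatched face of such a composition again starts with $1$ and hence admits no outgoing up-move, so no directed cycles exist) or write down the corresponding contracting homotopy $h$ with $\partial h+h\partial=\mathrm{id}$ and verify the signs. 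With those points filled in, your reassembly step matches the paper's: the per-run generators tensor together to the admissible simplices, which are cycles because every adjacent pair in an admissible code either changes sign or sums to $m+1>m$, and the outer faces are absent from the magnitude differential.
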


This theorem will be crucial for our computations.
It has been stated for our own purposes,
and does not appear explicitly in~\cite{Gu}.  However,
it can easily be extracted from the proof of~\cite[Theorem~4.6]{Gu},
whose `unmatched simplices' are exactly our admissible simplices,
and whose final paragraph states that these unmatched simplices
form a basis for the homology.

\begin{example}\label{admissible-low-degrees-example}
Let us explore this description in degrees $0,1,2$.
\begin{itemize}
	\item
	The $0$-simplices $(x_0)$ are all admissible.

	\item
	In degree $1$ the admissible codes are $(1)$
	and $(-1)$, so the admissible sequences are $(x,y)$ 
	for $x,y$ adjacent, or in other words the oriented edges.

	\item
	In degree $2$ the admissible simplices are:
	\begin{itemize}
		\item
		$(x,y,x)$, one for each oriented edge $(x,y)$.
		They have length $2$.
		The corresponding codes are $(1,-1)$ and $(-1,1)$.
		\item
		$(x,y,z)$, one for each ordered pair $(x,z)$
		with $d(x,z)=m$, where $y$ is determined by the conditions 
		$d(x,y)=1$ and $d(y,z)=m$.  They have length $(m+1)$.
		The corresponding codes are $(1,m)$ and $(-1,-m)$.
	\end{itemize}
\end{itemize}
\end{example}

\begin{definition}
\label{definition-euv}
The universal coefficient sequence of Remark~\ref{remark-universal}
gives an isomorphism
of $\MH_\ast^\ast(C_n)$ with the dual of $\MH_{\ast,\ast}(C_n)$,
and so we obtain the basis of $\MH_\ast^\ast(C_n)$ dual to the one of
Theorem~\ref{theorem-gu}.
Using this dual basis we define elements of $\MH_\ast^\ast(C_n)$ as follows.
\begin{itemize}
	\item
	For each vertex $x$,
	$e_x\in\MH_0^0(C_n)$ is the dual to $[(x)]\in\MH_{0,0}(C_n)$.
	\item
	For each oriented edge $(x,y)$,
	$\uu_{xy}\in\MH^1_1(C_n)$ is the dual to $[(x,y)]\in\MH_{1,1}(C_n)$.
	\item
	For each pair $x,z$ with $d(x,z)=m$,
	$\vv_{xz}\in\MH^2_{m+1}(C_n)$ denotes the dual to 
	$[(x,y,z)]\in\MH_{2,m+1}(C_n)$.
	Here $y$ is determined by the conditions $d(x,y)=1$, $d(y,z)=m$
	as in Example~\ref{admissible-low-degrees-example}.
\end{itemize}
\end{definition}

\begin{lemma}
\label{lemma-relations}
The relations specified in Theorem~\ref{theorem-cyclic} hold.
\end{lemma}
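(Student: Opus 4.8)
The plan is to exploit the fact that, by Gu's Theorem~\ref{theorem-gu} together with the universal coefficient sequence of Remark~\ref{remark-universal}, the magnitude homology $\MH_{\ast,\ast}(C_n)$ is free with basis the admissible simplices, and $\MH_\ast^\ast(C_n)$ is its graded dual, the generators $e_x,\uu_{xy},\vv_{xz}$ being the dual-basis elements singled out in Definition~\ref{definition-euv}. The Kronecker pairing $\langle-,-\rangle$ is therefore perfect, so each claimed identity can be verified by pairing both sides against every admissible simplex. To evaluate a product I use the cut-product formula of Definition~\ref{definition-magcoh}: if $\alpha,\beta$ are represented by cocycles $\tilde\alpha,\tilde\beta$ of cohomological degrees $p,q$, then for an admissible simplex $\bfx=(x_0,\ldots,x_{p+q})$ one has $\langle\alpha\cutprod\beta,[\bfx]\rangle=\tilde\alpha(x_0,\ldots,x_p)\cdot\tilde\beta(x_p,\ldots,x_{p+q})$, which is well defined because $\bfx$ is a cycle and the cut product of cocycles is again a cocycle by the Leibniz rule.

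First I would fix cocycle representatives. The classes $e_x$ and $\uu_{xy}$ are represented by the indicator cochains $(x)^\ast$ and $(x,y)^\ast$, and these are cocycles (compare the proofs of Propositions~\ref{proposition-zeroone} and~\ref{proposition-noncommutative}). The degree-$2$ classes $\vv_{xz}$ are the delicate point: the indicator cochain on the admissible simplex $(x,y',z)$ (with $d(x,y')=1$, $d(y',z)=m$, the determined simplex of Definition~\ref{definition-euv}) is \emph{not} a cocycle, so a genuine cocycle representative $\tilde\vv_{xz}\in\MC^2_{m+1}(C_n)$ must be produced from Gu's description of the magnitude chain complex in the bidegrees $(1,m+1)$, $(2,m+1)$, $(3,m+1)$.

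With these representatives the structural relations are immediate. The relations $e_x^2=e_x$, $e_xe_y=0$, the absorption rules $\uu_{xy}=e_x\uu_{xy}=\uu_{xy}e_y$ and $\vv_{xz}=e_x\vv_{xz}=\vv_{xz}e_z$, and the relation $\uu_{xy}\uu_{yz}=0$ for $x\neq z$ all follow by pairing with the relevant admissible simplices and reading off the cut-product formula. For instance $\langle\uu_{xy}\cutprod\uu_{yz},[(p,q,p)]\rangle=\tilde\uu_{xy}(p,q)\,\tilde\uu_{yz}(q,p)$ is nonzero only when $p=x=z$ and $q=y$, so it vanishes identically when $x\neq z$. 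Crucially, in every one of these verifications the simplices on which $\tilde\vv_{xz}$ must be evaluated are themselves \emph{admissible}, where its value is the dual-basis value $\langle\vv_{xz},-\rangle$ independently of the chosen representative; this is why the troublesome representative plays no role here.

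The substance of the lemma is the relation $\uu_{wx}\cutprod\vv_{xz}=\vv_{wy}\cutprod\uu_{yz}$ in $\MH^3_{m+2}(C_n)$, which I would verify by pairing both sides against the admissible $3$-simplices of length $m+2$. These are classified up to rotation by their codes; one finds the six codes $(1,m,1)$, $(-1,-m,-1)$, $(1,-1,-m)$, $(-1,1,m)$, $(1,m,-1)$, $(-1,-m,1)$. For the left-hand side the cut product splits an admissible $3$-simplex $\bfx$ as an edge $(x_0,x_1)$ followed by a $2$-face $(x_1,x_2,x_3)$, and for the right-hand side as a $2$-face $(x_0,x_1,x_2)$ followed by an edge $(x_2,x_3)$. \textbf{The main obstacle} is that these $2$-faces are in general \emph{not} admissible (for instance the suffix of $(1,m,1)$ is $(m,1)$), so their contribution is controlled by the full representative $\tilde\vv$ rather than by the dual basis. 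Equivalently, one must compute the homology-level coproduct $\Delta_\ast[\bfx]$ on admissible $3$-simplices and show that, after re-expressing the non-admissible tensor factors in the admissible basis, only the ``admissible $\otimes$ admissible'' splittings survive. Granting this, each of the six codes produces matching coefficients on the two sides, and the identity follows. I expect this coproduct computation, resting on Gu's homology and the reduction of non-admissible simplices modulo boundaries, to be the technical heart of the proof, with the bookkeeping over the six codes a finite, routine check.
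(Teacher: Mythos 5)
Your proposal is correct and follows essentially the same route as the paper: verify each relation by pairing against the admissible basis of $\MH_{\ast,\ast}(C_n)$ via the Kronecker pairing, with the only subtlety being exactly the one you flag, namely that the $2$-face $(x,y,z)$ with code $(m,1)$ appearing in the splitting of $(w,x,y,z)$ is not admissible. The paper resolves that flagged ``technical heart'' with a single explicit boundary, $\partial(x,y',y,z)=-(x,y,z)+(x,y',z)$, giving $[(x,y,z)]=[(x,y',z)]$ and hence $\langle \vv_{xz},[(x,y,z)]\rangle=1$ (and it shortens the bookkeeping by noting that $(w,x,y,z)$ is the \emph{only} admissible $3$-simplex of length $m+2$ from $w$ to $z$, while deriving the $e$- and $\uu$-relations from Theorem~\ref{theorem-diagonal} rather than by direct pairing).
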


\begin{proof}
One can check that, under the isomorphism of Theorem~\ref{theorem-diagonal},
$e_x$ corresponds to the path $x$ and $\uu_{xy}$ to the path $xy$,
and the theorem gives us the following relations:
\begin{itemize}
	\item
	$e_x^2=e_x$ for every vertex $x$.
	\item
	$e_xe_y=0$ for distinct vertices $x,y$.
	\item
	$\uu_{xy}=e_x\uu_{xy}=\uu_{xy}e_y$ for every oriented edge $xy$.
	\item
	$\uu_{xy}\uu_{yz}=0$ if $xy$ and $yz$ are oriented edges with $x\neq z$.
\end{itemize}
We now prove the relation:
\begin{itemize}
	\item
	$\vv_{xz}=e_x\vv_{xz}=\vv_{xz}e_z$ for every $x,z$ with $d(x,z)=m$.
\end{itemize}
Let $(a_0,a_1,a_2)$ be an admissible simplex. Then 
$
	\langle e_x\cdot \vv_{xy},[(a_0,a_1,a_2)]\rangle
	=
	\langle e_x, [(a_0)]\rangle\cdot \langle \vv_{xy},[(a_0,a_1,a_2)]\rangle
$
as one sees by choosing cocycles representing $e_x$ and $\vv_{xz}$.
For $(a_0,a_1,a_2)=(x,y,z)$ both factors evaluate to $1$,
and for any other choice of $(a_0,a_1,a_2)$ the second factor evaluates to $0$,
so that $e_x\cdot \vv_{xz}=\vv_{xz}$.  The other part of the relation is proved
similarly.
Now we prove the final relation:
\begin{itemize}
	\item
	$\uu_{wx}\vv_{xz}=\vv_{wy}\uu_{yz}$ for every $w,x,y,z$
	in cyclic order with 
	$d(w,x)=1$, $d(x,y)=m$, $d(y,z)=1$.
\end{itemize}
Let $(x_0,x_1,x_2,x_3)$ be an admissible simplex.
Then by choosing cocycles representing 
$\uu_{wx}$, $\vv_{xz}$, $\vv_{wy}$ and $\uu_{yz}$,
we see that
\[
	\langle \uu_{wx}\cutprod \vv_{xz},[(x_0,x_1,x_2,x_3)]\rangle
	=
	\langle \uu_{wx},[(x_0,x_1)]\rangle\cdot
	\langle \vv_{xz},[(x_1,x_2,x_3)]\rangle
\]
and
\[
	\langle \vv_{wy}\cutprod \uu_{yz},[(x_0,x_1,x_2,x_3)]\rangle
	=
	\langle \vv_{wy},[(x_0,x_1,x_2)]\rangle\cdot
	\langle \uu_{yz},[(x_2,x_3)]\rangle.
\]
Notice that in each case the right hand side vanishes unless
$x_0=w$, $x_3=z$, and $\ell(x_0,x_1,x_2,x_3)=(m+2)$.
The only admissible $3$-simplex with these properties is
$(x_0,x_1,x_2,x_3)=(w,x,y,z)$, and so it suffices to show
that the two right hand sides displayed coincide in this case.
This follows from the fact that $\langle \uu_{wx},[(w,x)]\rangle$, 
$\langle \uu_{yz},[(y,z)]\rangle$, $\langle \vv_{wy},[(w,x,y)]\rangle$
and $\langle \vv_{xz},[(x,y,z)]\rangle$ are all equal to $1$.
This is by definition in the first three cases.
In the final case, 
we note that $(x,y,z)$ has code 
$(m,1)$, but that if we let $y'$ denote the vertex with
$d(x,y')=1$ and $d(y',z)=m$, then 
$\partial(x,y',y,z)=-(x,y,z)+(x,y',z)$ so that $[(x,y,z)]=[(x,y',z)]$
and consequently $\langle \vv_{xz},[(x,y,z)]\rangle
= \langle \vv_{xz},[(x,y',z)]\rangle=1$.
So both sides of our relation coincide when evaluated on
any basis element of $\MH_{3,(m+2)}(C_n)$, and this completes
the proof.
\end{proof}

\begin{definition}[Monomials from admissible tuples]
\label{definition-monomials}
Suppose given an admissible simplex $\bfx$
with code $()$, $(1)$, $(-1)$, $(1,m)$ or $(-1,-m)$.
Then we define $p_\bfx\in\MH^\ast_\ast(C_n)$ to be the class
$e_{x_0}$, $\uu_{x_0x_1}$ or $\vv_{x_0x_2}$ dual to 
$[\bfx]$.
More generally, if $\bfx=(x_0,\ldots,x_k)$ is 
an admissible simplex with $k>0$,
then there is a unique way to decompose $\bfx$ into `pieces'
\[
	\bfx_1=(x_{i_0},\ldots,x_{i_1}),
	\ 
	\bfx_2=(x_{i_1},\ldots,x_{i_2}),
	\ 
	\ldots,
	\ 
	\bfx_r=(x_{i_{r-1}},\ldots,x_{i_r}),
\]
with $i_0=0$ and $i_r=k$, such that each piece has code
$(1)$, $(-1)$, $(1,m)$ or $(-1,-m)$, 
and in this case we define $p_\bfx = p_{\bfx_1}\cdots p_{\bfx_r}$.
\end{definition}

\begin{example}
An admissible simplex $\bfx=(x_0,x_1,x_2,x_3,x_2,x_3,x_1)$
with code $(1,m,1,-1,1,m)$ breaks into pieces
\[
	(x_0,x_1,x_2),
	\quad
	(x_2,x_3),
	\quad
	(x_3,x_2),
	\quad
	(x_2,x_3,x_1)
\]
with codes
\[
	(1,m),
	\quad
	(1),
	\quad
	(-1),
	\quad
	(1,m)
\]
respectively, and so the corresponding monomial is 
$p_\bfx = \vv_{x_0x_2}\uu_{x_2x_3}\uu_{x_3x_2}\vv_{x_2x_1}$.
\end{example}

\begin{lemma}
\label{lemma-monomials}
The relations of Theorem~\ref{theorem-cyclic} imply that
every monomial in the generators of Theorem~\ref{theorem-cyclic}
is either $0$,
or has the form $p_\bfx$ for some admissible simplex $\bfx$.
\end{lemma}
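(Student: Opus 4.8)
The plan is to reduce an arbitrary monomial to the normal form $p_\bfx$ by a two-stage process: first absorb the idempotents $e_x$, and then repair the resulting code one junction at a time. First I would use the idempotent relations to simplify. A monomial is a product of terms $e_x$, $\uu_{xy}$ and $\vv_{xz}$, and the relations $e_x^2=e_x$ and $e_xe_y=0$ (for $x\neq y$), together with the absorption relations $\uu_{xy}=e_x\uu_{xy}=\uu_{xy}e_y$ and $\vv_{xz}=e_x\vv_{xz}=\vv_{xz}e_z$ (which also yield $e_c\uu_{xy}=0$ for $c\neq x$, and so on), let me delete every occurrence of an $e$ at the cost of forcing the endpoints of consecutive generators to agree. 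The outcome is that the monomial is either $0$, or a single idempotent $e_x=p_{(x)}$, or a product $g_1\cdots g_r$ of generators $\uu,\vv$ whose consecutive endpoints match. Reading off the steps of each $g_i$ — a step $(1)$ or $(-1)$ for a $\uu$, and a pair $(1,m)$ or $(-1,-m)$ (with its determined middle vertex) for a $\vv$ — assembles the product into a single simplex $\bfx$, and whenever $\bfx$ is admissible this exhibits the product as $p_\bfx$, so we are done.

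Next I would analyse exactly when the assembled code $\bfi$ fails to be admissible. Since each piece begins with $\pm1$ and is itself sign-homogeneous, sign changes occur only at piece boundaries, so every maximal constant-sign run automatically begins with $\pm1$; hence admissibility is equivalent to every consecutive pair of entries of $\bfi$ being one of the pairs occurring in $1,m,1,m,\dots$ or its negative. Tabulating the four piece types, the only pairs that can violate this arise at a junction where a $\uu$-piece is immediately followed by a piece of the same direction, namely $(1)(1)$, $(1)(1,m)$, or their negatives. In the first case the junction is $\uu_{xy}\uu_{yz}$ with $x\neq z$ (here $n\geq5$ is used), which the relation $\uu_{xy}\uu_{yz}=0$ sends to $0$. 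In the second case a direct check of the four vertices shows the junction matches the left-hand side of $\uu_{wx}\vv_{xz}=\vv_{wy}\uu_{yz}$, so applying that relation replaces the bad pair $(1)(1,m)$ by the admissible pair $(1,m)(1)$; the negative case $(-1)(-1,-m)$ is handled by the orientation-reversed instance of the same relation, which is available by the reflection symmetry of $C_n$.

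Finally I would organise these local moves into a terminating procedure. Because sign changes occur only at piece boundaries, the product decomposes into maximal blocks of same-direction pieces, and a second glance at the tabulation shows that every junction between two different blocks is already admissible; so it suffices to bring each block into admissible form. Within a positive block I would repeatedly use the commutation relation to move each $(1)$-piece rightward past the $(1,m)$-pieces. This strictly decreases the number of pairs in which a $(1)$ precedes a $(1,m)$, so the procedure terminates; a block with no such pair has the form $(1,m)^a$ or $(1,m)^a(1)$, unless two $(1)$-pieces have become adjacent, in which case $\uu_{xy}\uu_{yz}=0$ makes the whole monomial vanish. The same runs through for negative blocks. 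Once every block is in this shape the full code is admissible, so the monomial is either $0$ or equals $p_{\bfx'}$ for the resulting admissible simplex $\bfx'$.

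I expect the main obstacle to be bookkeeping rather than anything conceptual. The two points requiring care are verifying that the instance of $\uu_{wx}\vv_{xz}=\vv_{wy}\uu_{yz}$ triggered at a bad $\uu\vv$ junction really does have its vertices $w,x,y,z$ in the configuration demanded by the relation (and that the orientation-reversed instance is legitimate), and confirming that the rewriting terminates and never creates a fresh bad junction elsewhere — which is precisely why I confine the argument to a single sign-homogeneous block and record that inter-block junctions are always admissible. Both become routine once the four piece types and the list of allowed consecutive pairs are tabulated, and that tabulation is the step I would set down first in the write-up.
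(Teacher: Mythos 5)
Your proposal is correct and follows essentially the same route as the paper's proof: absorb the idempotents via the $e$-relations, split the surviving word of $\uu$'s and $\vv$'s into maximal sign-homogeneous (clockwise/anticlockwise) blocks, and within each block use the relation $\uu_{wx}\vv_{xz}=\vv_{wy}\uu_{yz}$ to push $\uu$-terms to the right, concluding with $0$ (two adjacent $\uu$'s) or an admissible code. Your explicit termination measure and tabulation of junction types merely make precise the steps the paper leaves as routine verifications.
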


\begin{proof}
Lemma~\ref{lemma-relations} shows that the given relations
among the generators hold.  
Using the relations involving the $e_x$, we may ensure that
our monomial is either $0$, or a single $e_x$,
or that the monomial consists entirely of $\uu$'s and $\vv$'s.
Using the same relations again, we may ensure that the second subscript
of each term always coincides with the first subscript of the next term,
otherwise we obtain $0$ once more.
Let us say that $\uu_{xy}$ is \emph{clockwise} if $x,y$ are in clockwise order,
and \emph{anticlockwise} otherwise.  And let us say that
$\vv_{xz}$ is \emph{clockwise} if $x,z$ are in \emph{anti}clockwise order,
and \emph{anticlockwise} otherwise.
Now we factor our monomial $p$ into the maximal factors
$p_1,\ldots,p_r$ where each $p_i$ 
has entries that are all either clockwise or anticlockwise.
Our `clockwise' conventions ensure that in each $p_i$, 
any instance of an $\uu$ term preceding a $\vv$
term is an instance of the left-hand-side of the final relation
of Theorem~\ref{theorem-cyclic}.
We may therefore use that final relation to ensure that any
$\uu$ terms occur after any $\vv$ terms.
If we find more than one $\uu$ term, then $p_i=0$
and so $p=0$.
Otherwise, each $p_i$ now has form $p_{\bfx_i}$ for an appropriate
$\bfx_i$, and consequently $p=p_\bfx$ where
where $\bfx$ is obtained by combining the $\bfx_i$.
\end{proof}

\begin{lemma}
\label{lemma-kronecker}
Let $\bfx$ and $\bfy$ be admissible simplices.
Then
\[
	\langle p_\bfx,[\bfy]\rangle
	=
	\left\{
	\begin{array}{ll}
		1 & \text{if }\bfx=\bfy
		\\
		0 & \text{if }\bfx\neq\bfy
	\end{array}
	\right.
\]
The $p_\bfx$ for $\bfx$ admissible form a basis
of $\MH^\ast_\ast(C_n)$.
\end{lemma}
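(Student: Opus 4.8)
The plan is to establish the pairing formula $\langle p_\bfx,[\bfy]\rangle=\delta_{\bfx\bfy}$ first, and then to read off the basis statement as a formal consequence. To compute the pairing I would exploit that the product $\cutprod$ on $\MH^\ast_\ast(C_n)$ is, by construction (Definition~\ref{definition-magcoh}), dual to the coproduct on the magnitude chains, which sends $(y_0,\ldots,y_k)$ to $\sum_i (y_0,\ldots,y_i)\otimes(y_i,\ldots,y_k)$. Writing the canonical decomposition $p_\bfx=p_{\bfx_1}\cutprod\cdots\cutprod p_{\bfx_r}$ of Definition~\ref{definition-monomials}, with pieces of (co)homological degree $d_s\in\{1,2\}$, associativity together with the fact that a cochain annihilates every simplex of the wrong degree forces
\[
	\langle p_\bfx,[\bfy]\rangle
	=
	\big\langle p_{\bfx_1},[(y_0,\ldots,y_{d_1})]\big\rangle\cdot
	\big\langle p_{\bfx_2}\cutprod\cdots\cutprod p_{\bfx_r},[(y_{d_1},\ldots,y_k)]\big\rangle.
\]
I would therefore argue by induction on $r$, peeling one piece off the left at each stage; the base case is $\bfx=(x_0)$, where $p_\bfx=e_{x_0}$ and $\langle e_{x_0},[(y_0)]\rangle$ is $1$ exactly when $x_0=y_0$.

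The inductive step rests on a few structural observations about an admissible simplex $\bfy$, all read off from the shape of admissible codes. Every prefix of $\bfy$ is again admissible (admissibility of a code is preserved under truncation), so the first factor above is an honest Kronecker pairing of a generator against a basis class of Theorem~\ref{theorem-gu}, and by Definition~\ref{definition-euv} it is computed as follows. A factor $\langle\uu_{ab},[(y_0,y_1)]\rangle$ equals $1$ exactly when $(y_0,y_1)$ is the oriented edge $(a,b)$; in particular it vanishes on a length-$m$ step, in accordance with $\MH^1_\ell(C_n)=0$ for $\ell\neq1$. A factor $\langle\vv_{ac},[(y_0,y_1,y_2)]\rangle$ vanishes unless $(y_0,y_1,y_2)$ has length $m+1$ and endpoints at distance $m$, and then equals $1$ precisely when its class agrees with that of the admissible code-$(1,m)$ simplex from $a$ to $c$; here the homological identity $[(x,y,z)]=[(x,y',z)]$ of Lemma~\ref{lemma-relations} is what allows a code-$(m,1)$ block to be recognised. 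For the recursion I must then identify the class of the (generally \emph{non}-admissible) suffix $(y_{d_1},\ldots,y_k)$ in the admissible basis, once more using Theorem~\ref{theorem-gu} and the relations of Lemma~\ref{lemma-relations}.

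The decisive bookkeeping is that the bidegree pins down the multiplicities: a $\uu$ contributes $(1,1)$ and a $\vv$ contributes $(2,m+1)$, and since the matrix $\left(\begin{smallmatrix}1&2\\1&m+1\end{smallmatrix}\right)$ is invertible for $m\geq2$, every admissible simplex of a fixed bidegree has the same number of $\uu$- and $\vv$-pieces. A nonzero pairing thus forces the edges and distance-$m$ endpoint pairs recorded by the pieces of $\bfx$ to match, step by step, with the corresponding blocks of $\bfy$; following the shared endpoints along the chain then forces the cut points of $\bfx$ onto the piece-boundaries of $\bfy$, whence $\bfx=\bfy$. I expect this uniform vanishing of the off-diagonal terms to be the main obstacle: one must verify that whenever the degree sequences of $\bfx$ and $\bfy$ disagree, some block of $\bfy$ is either a null-homologous length-$m$ step (killing a $\uu$-factor) or a distance-$m$ block whose reduced class has the wrong endpoints (killing a $\vv$-factor), and this demands a careful case analysis over the possible local shapes of admissible codes. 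Granting the pairing formula, the basis statement is immediate: Theorem~\ref{theorem-gu} shows $\MH_{\ast,\ast}(C_n)$ is free on the classes $[\bfy]$, so the Ext-term in the universal coefficient sequence of Remark~\ref{remark-universal} vanishes and $\langle-,-\rangle$ identifies $\MH^\ast_\ast(C_n)$ with $\Hom(\MH_{\ast,\ast}(C_n),\Z)$; the formula then says precisely that each $p_\bfx$ is the functional dual to $[\bfx]$, so the $p_\bfx$ constitute the dual basis and in particular a $\Z$-basis of $\MH^\ast_\ast(C_n)$.
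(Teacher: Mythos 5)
Your skeleton---factor the pairing over the pieces of $\bfx$, check the diagonal case gives $1$, show the off-diagonal terms vanish, then read off the dual-basis statement from Theorem~\ref{theorem-gu} and the universal coefficient sequence---matches the paper's, and your treatment of the diagonal case and of the basis claim is correct. Two things go wrong in the off-diagonal part. First, your left-to-right recursion requires, at every step, re-expressing the class of the (generally non-admissible) suffix $(y_{d_1},\ldots,y_k)$ in the admissible basis; you flag this but never do it, and for long suffixes it is real work. The paper sidesteps it entirely by cutting $\bfy$ at the cut points of $\bfx$ (``in parallel''), so that every factor is a pairing in degree at most $2$, where everything is explicit.

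Second, and more seriously, you have mischaracterised why the off-diagonal terms vanish. You claim that when $\bfx\neq\bfy$ some individual factor must die (a null-homologous length-$m$ edge killing a $\uu$, or a distance-$m$ block with the wrong endpoints killing a $\vv$). In the critical case this is false: after the endpoint-matching forced by the $e_x$-relations, the only possible discrepancy is a degree-$2$ piece where $\bfx_j$ has code $(1,m)$ and the corresponding segment of $\bfy$ has code $(m,1)$ with the \emph{same} endpoints---and there the $\vv$-factor does \emph{not} vanish, precisely because of the identity $[(x,y,z)]=[(x,y',z)]$ that you yourself invoke. No factor kills such a term. The paper excludes this configuration by a global admissibility argument: at the first such discrepancy, admissibility of $\bfy$ forces the code entry preceding the $(m,1)$ to be $1$; agreement of all earlier entries transfers this to $\bfx$, whose code then contains the forbidden subsequence $(1,1,m)$, contradicting admissibility of $\bfx$. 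Your bidegree observation (invertibility of $\left(\begin{smallmatrix}1&2\\1&m+1\end{smallmatrix}\right)$) only equalises the \emph{numbers} of $\uu$- and $\vv$-pieces, not their positions, so it cannot substitute for this step. The ``careful case analysis'' you defer is exactly where the proof lives, and the needed idea is absent from your proposal.
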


\begin{proof}
Suppose $\bfx=\bfy$.
Then, following the definition of the product
and the construction of $p_\bfx$,
we find that $\langle p_\bfx,[\bfx]\rangle
=\prod \langle p_{\bfx_i},[\bfx_i]\rangle$
where $\bfx_1,\ldots,\bfx_r$ is the decomposition
of $\bfx$ into `pieces' as in Definition~\ref{definition-monomials}.  
And by definition, 
each $\langle p_{\bfx_i},[\bfx_i]\rangle$ is equal to $1$.

Suppose now that $\langle p_\bfx,[\bfy]\rangle\neq 0$.
We will show that $\bfx=\bfy$.
Decompose $\bfx$ into $\bfx_1,\ldots,\bfx_r$
as in Definition~\ref{definition-monomials},
and decompose $\bfy$ {in parallel} with $\bfx$, 
so that if the pieces for $\bfx$ are
\[
	\bfx_1=(x_{i_0},\ldots,x_{i_1}),
	\ 
	\bfx_2=(x_{i_1},\ldots,x_{i_2}),
	\ 
	\ldots
	\ 
	\bfx_r=(x_{i_{r-1}},\ldots,x_{i_r}),
\]
then those for $y$ are 
\[
	\bfy_1=(y_{i_0},\ldots,y_{i_1}),
	\ 
	\bfy_2=(y_{i_1},\ldots,y_{i_2}),
	\ 
	\ldots
	\ 
	\bfy_r=(y_{i_{r-1}},\ldots,y_{i_r}).
\]
Each of the $\bfy_i$ is still a cycle,
and $\langle p_\bfx,[\bfy]\rangle
=\prod \langle p_{\bfx_i},[\bfy_i]\rangle$,
so that each $\langle p_{\bfx_j},[\bfy_j]\rangle$
must be nonzero, and in particular $x_{i_{j-1}}=y_{i_{j-1}}$
and $x_{i_{j}}=y_{i_{j}}$.
If follows that $x_{i_j}=y_{i_j}$ for $j=0,\ldots,r$.
The only way that $\bfx$ and $\bfy$ can now differ
is that $\bfx$ and $\bfy$ may now have pieces
$\bfx_j=(x_{i_{j-1}},x,x_{i_j})$ and
$\bfy_j=(x_{i_{j-1}},y,x_{i_j})$ 
with codes $(1,m)$ and $(m,1)$ respectively,
or with codes $(-1,-m)$ and $(-m,-1)$ respectively.
Let us suppose it is the positive case.
Consider the first instance of such a difference.
Since $\bfy$ is admissible, the term preceding
$(m,1)$ in the code of $\bfy$ must be $1$. 
The same must therefore be true of $\bfx$,
so that the code of $\bfx$ contains a subsequence $(1,1,m)$.
This is a contradiction, so that $\bfx$ and $\bfy$ must coincide.
\end{proof}

\begin{proof}[Proof of Theorem~\ref{theorem-cyclic}]
Let $A_\ast^\ast$ denote the bigraded ring determined by the presentation
in Theorem~\ref{theorem-cyclic}.
Definition~\ref{definition-euv} and Lemma~\ref{lemma-monomials}
determine a well-defined homomorphism 
$h\colon A_\ast^\ast\to \MH_\ast^\ast(C_n)$.
Definition~\ref{definition-monomials} and Lemma~\ref{lemma-monomials}
determine a spanning set (the $p_\bfx$ for $\bfx$ admissible)
for $A_\ast^\ast$, and Lemma~\ref{lemma-kronecker} shows that $h$
sends this spanning set into a basis of $\MH^\ast_\ast(C_n)$.
It follows that $h$ is an isomorphism.
\end{proof}


\begin{thebibliography}{{C}ra92}

\bibitem[{C}ra92]{CrawleyBoevey}
William {C}rawley{-}{B}oevey.
\newblock Lectures on representations of quivers.
\newblock 1992.

\bibitem[EML53]{EM1}
Samuel Eilenberg and Saunders Mac~Lane.
\newblock On the groups {$H(\Pi,n)$}. {I}.
\newblock {\em Ann. of Math. (2)}, 58:55--106, 1953.

\bibitem[EML54]{EM2}
Samuel Eilenberg and Saunders Mac~Lane.
\newblock On the groups {$H(\Pi,n)$}. {II}. {M}ethods of computation.
\newblock {\em Ann. of Math. (2)}, 60:49--139, 1954.

\bibitem[GG21]{GimperleinGoffeng}
Heiko Gimperlein and Magnus Goffeng.
\newblock On the magnitude function of domains in {E}uclidean space.
\newblock {\em Amer. J. Math.}, 143(3):939--967, 2021.

\bibitem[Gu18]{Gu}
Yuzhou Gu.
\newblock Graph magnitude homology via algebraic morse theory.
\newblock Preprint, available at
  \href{https://arxiv.org/abs/1809.07240v1}{arXiv:1809.07240v1}, 2018.

\bibitem[Hat02]{Hatcher}
Allen Hatcher.
\newblock {\em Algebraic topology}.
\newblock Cambridge University Press, Cambridge, 2002.

\bibitem[HW17]{HepworthWillerton}
Richard Hepworth and Simon Willerton.
\newblock Categorifying the magnitude of a graph.
\newblock {\em Homology Homotopy Appl.}, 19(2):31--60, 2017.

\bibitem[Jub18]{Jubin}
Beno\^it Jubin.
\newblock On the magnitude homology of metric spaces.
\newblock Preprint, available at
  \href{https://arxiv.org/abs/1803.05062}{arXiv:1803.05062}, 2018.

\bibitem[KY21]{KanetaYoshinaga}
Ryuki Kaneta and Masahiko Yoshinaga.
\newblock Magnitude homology of metric spaces and order complexes.
\newblock {\em Bull. Lond. Math. Soc.}, 53(3):893--905, 2021.

\bibitem[Law73]{Lawvere}
F.~William Lawvere.
\newblock Metric spaces, generalized logic, and closed categories.
\newblock {\em Rend. Sem. Mat. Fis. Milano}, 43:135--166 (1974), 1973.

\bibitem[Law02]{LawvereReprint}
F.~William Lawvere.
\newblock Metric spaces, generalized logic, and closed categories [{R}end.
  {S}em. {M}at. {F}is. {M}ilano {\bf 43} (1973), 135--166 (1974); {MR}0352214
  (50 \#4701)].
\newblock {\em Repr. Theory Appl. Categ.}, (1):1--37, 2002.
\newblock With an author commentary: Enriched categories in the logic of
  geometry and analysis.

\bibitem[Lei08]{LeinsterEuler}
Tom Leinster.
\newblock The {E}uler characteristic of a category.
\newblock {\em Doc. Math.}, 13:21--49, 2008.

\bibitem[Lei13]{LeinsterMagnitudeMetric}
Tom Leinster.
\newblock The magnitude of metric spaces.
\newblock {\em Doc. Math.}, 18:857--905, 2013.

\bibitem[Lei19]{LeinsterGraph}
Tom Leinster.
\newblock The magnitude of a graph.
\newblock {\em Math. Proc. Cambridge Philos. Soc.}, 166(2):247--264, 2019.

\bibitem[LS21]{LeinsterShulman}
Tom Leinster and Michael Shulman.
\newblock Magnitude homology of enriched categories and metric spaces.
\newblock {\em Algebr. Geom. Topol.}, 21(5):2175--2221, 2021.

\bibitem[Ott18]{otter}
Nina Otter.
\newblock Magnitude meets persistence. {H}omology theories for filtered
  simplicial sets.
\newblock Preprint, available at
  \href{https://arxiv.org/abs/1807.01540v1}{arXiv:1807.01540v1}, 2018.

\bibitem[SP94]{SolowPolasky}
Andrew~R. Solow and Stephen Polasky.
\newblock Measuring biological diversity.
\newblock {\em Environ. Ecol. Stat.}, 1:95--107, 1994.

\bibitem[Wei94]{Weibel}
Charles~A. Weibel.
\newblock {\em An introduction to homological algebra}, volume~38 of {\em
  Cambridge Studies in Advanced Mathematics}.
\newblock Cambridge University Press, Cambridge, 1994.

\bibitem[Wil18]{Willerton}
Simon Willerton.
\newblock Magnitude {H}omology {R}eading {S}eminar, {I}.
\newblock Blogpost on the $n$-{C}ategory {C}af\'e.
  \href{https://golem.ph.utexas.edu/category/2018/03/magnitude_homology_reading_sem.html}{https://golem.ph.utexas.edu/category/2018/03/magnitude\textunderscore
  homology\textunderscore reading\textunderscore sem.html}, 2018.

\end{thebibliography}
\end{document}